\newtheorem{thm}{Theorem}[section]
\newtheorem{lemma}[thm]{Lemma}
\newcommand{\bmb}{\left( \begin{array}{rr}}
\newcommand{\enm}{\end{array}\right)}
\newcommand{\C}{{\mathbb C}}
\newcommand{\Z}{{\mathbb Z}}
\newcommand{\N}{{\mathbb N}}
\newcommand{\al}{{\alpha}}
\numberwithin{equation}{section}
\begin{document}

\vspace*{1.5pc}

\large

{\bf\Large \begin{center}
Truncated determinants and the refined enumeration of Alternating Sign Matrices and Descending Plane Partitions
\end{center}}

\medskip

\centerline{\Large 
Philippe Di Francesco}

\medskip

\centerline{\Large 
Institut de Physique Th\'eorique,}
\centerline{\Large Commissariat \`a l'Energie Atomique, Saclay, France}

\bigskip
\medskip

\section{Introduction}

In these notes, we will be mainly focussing on the proof of the so-called ASM-DPP conjecture of
Mills, Robbins and Rumsey \cite{MRR} which relates refined enumerations of Alternating Sign Matrices
(ASM) and Descending Plane Partitions (DPP).

ASMs were introduced by Mills, Robbins and Rumsey \cite{RR} in their study of Dodgsonﾕs condensation algorithm 
for the evaluation of determinants. DPPs were introduced by  Andrews \cite{AN} while attempting to prove a conjectured 
formula for the generating function of cyclically symmetric plane partitions. 

\subsection{ASMs from lambda-determinant}

The definition of the so-called lambda-determinant of Mills, Robbins and Rumsey \cite{RR} is based on the famous
Dodgson condensation algorithm \cite{DOD} for computing determinants, itself based on the Desnanot-Jacobi
equation, a particular Pl\"ucker relation, relating minors of any square $k+1\times k+1$ matrix $M$:
\begin{equation}\label{desnajac}
\vert M\vert \times \vert M_{1,k+1}^{1,k+1}\vert=\vert M_{k+1}^{k+1}\vert \times \vert M_1^1\vert -
\vert M_1^{k+1}\vert \times \vert M_{k+1}^1\vert
\end{equation}
where $\vert M_{i_1,i_2,...,i_r}^{j_1,j_2,...,j_r}\vert$ stands for the determinant of the matrix obtained from $M$
by deleting rows $i_1,...,i_r$ and columns $j_1,...,j_r$. The relation \eqref{desnajac} may be used as a
recursion relation on the size of the matrix, allowing for efficiently compute its determinant.

More formally, we may recast the algorithm using the so-called $A_\infty$ $T$-system 
(also known as discrete Hirota) relation:
\begin{equation}\label{tsys}
T_{i,j,k+1}T_{i,j,k-1}=T_{i,j+1,k}T_{i,j-1,k}-T_{i+1,j,k}T_{i-1,j,k}
\end{equation}
for any $i,j,k\in \Z$ with fixed parity of $i+j+k$. 
Now let $A=(a_{i,j})_{i,j\in \{1,2,...,n\}}$ be a fixed $n\times n$ matrix. Together with the initial data:
\begin{eqnarray}
T_{\ell,m,0}&=&1\quad \qquad \qquad\qquad (\ell,m\in \Z;\ell+m=n\, {\rm mod}\, 2)\nonumber \\ 
T_{i,j,1}&=&a_{{j-i+n+1\over 2},{i+j+n+1\over 2}} \quad
(i,j\in\Z;i+j=n+1\, {\rm mod}\, 2; |i|+|j|\leq n-1)\, ,\label{initdat}
\end{eqnarray}
the solution of the $T$-system \eqref{tsys} satisfies:
\begin{equation}
T_{0,0,n}=\det(A)
\end{equation}

Given a fixed formal parameter $\lambda$,
the lambda-determinant of the matrix $A$, denoted by $\vert A\vert_\lambda$ is simply defined 
as the solution $T_{0,0,n}=\vert A\vert_\lambda$ of the deformed $T$-system
\begin{equation}\label{defoT} 
T_{i,j,k+1}T_{i,j,k-1}=T_{i,j+1,k}T_{i,j-1,k}+\lambda \, T_{i+1,j,k}T_{i-1,j,k}
\end{equation}
subject to the initial condition \eqref{initdat}.

The discovery of Mills, Robbins and Rumsey is that the lambda-determinant is a homogeneous
Laurent polynomial of the matrix entries of degree $n$, and that moreover the monomials
in the expression are coded by $n\times n$ matrices $B$ with entries $b_{i,j}\in \{0,1,-1\}$, characterized by the fact that
their row and column sums are $1$ and that the partial row and column sums are non-negative, namely
\begin{eqnarray*} \sum_{i=1}^k b_{i,j} \geq 0 & & \quad \sum_{i=1}^k b_{j,i}\geq 0\quad (k=1,2,...,n-1;j=1,2,...,n)\\
 \sum_{i=1}^n b_{i,j} = 1 & & \quad \sum_{i=1}^n b_{j,i}=1\quad (j=1,2,...,n)
\end{eqnarray*}
Such matrices $B$ are called alternating sign matrices (ASMs). These include the permutation matrices (the ASMs
with no $-1$ entry). Here are the 7 ASMs of size 3:
$$\small  \begin{pmatrix}1 &0&0\\0&1&0\\0&0&1\end{pmatrix}\quad
\begin{pmatrix}1 &0&0\\0&0&1\\0&1&0\end{pmatrix}\quad
\begin{pmatrix}0 &1&0\\1&0&0\\0&0&1\end{pmatrix}\quad
\begin{pmatrix}0&1&0\\1&-1&1\\0&1&0\end{pmatrix}\quad
\begin{pmatrix}0 &1&0\\0&0&1\\1&0&0\end{pmatrix}\quad
\begin{pmatrix}0 &0&1\\1&0&0\\0&1&0\end{pmatrix}\quad
\begin{pmatrix}0 &0&1\\0&1&0\\1&0&0\end{pmatrix}$$
There is an explicit formula for the lambda-determinant\cite{RR}:
\begin{equation}\label{lambdadet}
\vert A\vert_\lambda=\sum_{n\times n\, ASM\, B} \lambda^{{\rm Inv}(B)-N(B)} (1+\lambda)^{N(B)}\prod_{i,j} a_{i,j}^{b_{i,j}}
\end{equation}
where ${\rm Inv}(B)$ and $N(B)$ denote respectively the inversion number and the number of entries $-1$ in $B$,
with 
\begin{eqnarray*}
{\rm Inv}(B)&=&\sum_{1\leq i<j \leq n\atop 1\leq k<\ell \leq n} b_{i,\ell}b_{j,k} \\
N(B)&=&\frac{1}{2}\left( -n+\sum_{1\leq i,j\leq n} |b_{i,j}| \right) 
\end{eqnarray*}
Note that for $\lambda=-1$, only the ASMs with $N(B)=0$ contribute, i.e. the permutation matrices, for which
${\rm Inv}(B)$ coincides with the usual inversion number of the corresponding permutation, and therefore \eqref{lambdadet}
reduces to the usual formula for the determinant. 

Mills, Robbins and Rumsey \cite{MRR}
noticed that apart from the quantities ${\rm Inv}(B)$ and $N(B)$, another
``observable" of interest is the position of the unique $1$ in the top row of any ASM. We denote
by $t(B)$ the number of $0$ entries to the left of the $1$ in the top row of $B$.

Associating a weight 
\begin{equation}\label{weightasm}
W(B)=z^{t(B)}y^{{\rm Inv}(B)-N(B)} x^{N(B)}\end{equation} 
to each ASM $B$, we may form the partition function
\begin{equation}\label{partASM}
Z_{ASM}^{(n)}(x,y,z)=\sum_{n\times n\, ASM\, B} W(B) \end{equation}

In the case $n=3$ listed above, the ASMs receive respective weights: 
$1,zy,zxy,y,zy^2,z^2y^2,z^2y^3$, leading to $Z_{ASM}^{(3)}(x,y,z)=1+zy+zxy+y+zy^2+z^2y^2+z^2y^3$.

\subsection{DPPs}

Descending plane partitions are arrays of positive integers of the form:
$$\begin{matrix} a_{1,1} & a_{1,2} & a_{1,3} & \cdots & \cdots & a_{1,\mu_1-2} & a_{1,\mu_1-1} & a_{1,\mu_1} \\
 & a_{2,2} & a_{2,3} & \cdots & \cdots & a_{2,\mu_2} & & \\
 & & a_{3,3} & \cdots & a_{3,\mu_3} & & & \\
 & & \ddots & \cdots & & & & \\
 & & & a_{r,r}\cdots a_{r,\mu_r} & & & &
\end{matrix}$$
such that the sequece $\mu_i$ is strictly decreasing $\mu_{i+1}<\mu_i$, and that for $\lambda_i=\mu_i-i+1$,
$\lambda_0=\infty$:
$$  a_{i,j}\geq a_{i,j+1} \qquad a_{i,j}>a_{i+1,j} \qquad \lambda_i<a_{i,i}\leq \lambda_{i-1} $$
for all $i,j$. By convention, the empty partition is a DPP.
Here are the 7 DPPs of order 3:
$$\emptyset,\qquad \begin{matrix} 2 \end{matrix},\qquad \begin{matrix} 3 \end{matrix}, \qquad 
\begin{matrix} 3 & 1 \end{matrix},\qquad 
\begin{matrix} 3 & 2\end{matrix},\qquad \begin{matrix} 3 & 3\end{matrix},
\qquad \begin{matrix} 3 & 3\\ & 2\end{matrix}\, .$$

The integers $a_{i,j}$ are called parts. A DPP $A$ is said to be of order $n$ if $a_{i,j}\leq n$ for all $i,j$. A part
$a_{i,j}$ is said to be special if $a_{i,j}\leq j-i$.
We denote by $S(A)$ and $NS(A)$ respectively the total number of special parts and the total
number of non-special parts of any DPP $A$. Another observable of interest among the DPPs $A$ of order $n$ 
is the number of parts in $A$ equal to the order, which we denote by $M(A)$. 
To each DPP $A$ of given order, we associate a weight
\begin{equation}\label{weightdpp}
W(A)=x^{S(A)}y^{NS(A)}z^{M(A)}
\end{equation}
and define the partition function for DPPs of order $n$ to be:
\begin{equation}\label{partdpp}
Z_{DPP}^{(n)}(x,y,z)=\sum_{DPP\, A\, {\rm of}\, {\rm order}\, n} W(A) 
\end{equation}

The 7 DPPs of order $3$ listed above have respective weights: 
$1,y,zy,zxy,zy^2,z^2y^2,z^2y^3$, as $M(A)$ is the number of occurrences of the part $3$,
and the only special part is the entry $1$ in the fourth DPP.
This leads to the partition function $Z_{DPP}^{(3)}= 1+y+zy+zxy+zy^2+z^2y^2+z^2y^3$.

\subsection{The ASM-DPP conjecture}

The ASM-DPP conjecture as stated by Mills, Robbins and Rumsey \cite{MRR} amounts to the
identity between the partition functions of ASMs and DPPs as defined in the previous sections.
This is the following:
\begin{thm}\label{mrrthm}
The partition functions for the refined enumeration of ASMs and DPPs coincide, namely
$$Z_{ASM}(x,y,z)=Z_{DPP}(x,y,z)$$
\end{thm}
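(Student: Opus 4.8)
The plan is to convert both sides of the claimed identity into determinants of explicit matrices and then prove that those determinants agree, consistent with the ``truncated determinant'' philosophy announced in the title.

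First, for the DPP side, I would encode a descending plane partition of order $n$ as a family of non-intersecting lattice paths. The array's constraints --- weakly decreasing along rows, strictly decreasing along columns, together with the shape conditions $\lambda_i < a_{i,i} \le \lambda_{i-1}$ --- are exactly what is needed to read the parts as the heights of a tuple of paths that never touch. Assigning to each step a weight that records whether the part it represents is special (i.e. $a_{i,j}\le j-i$), non-special, or equal to the order $n$, the Lindstr\"om--Gessel--Viennot lemma turns the sum $Z_{DPP}^{(n)}(x,y,z)$ into a single determinant $\det M_{DPP}$, whose entries are generating polynomials (binomial-type sums in $x,y,z$) admitting closed form.

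Second, for the ASM side, I would pass through the six-vertex model with domain-wall boundary conditions, under which ASMs are in bijection with ice configurations. The Boltzmann weights are chosen so that a configuration contributes $y^{\mathrm{Inv}(B)-N(B)}x^{N(B)}$, matching \eqref{weightasm}, while the refinement parameter $z$ enters as a boundary twist selecting the position of the single $1$ in the top row, thereby recording $t(B)$. The Izergin--Korepin formula, in its refined form, then expresses $Z_{ASM}^{(n)}(x,y,z)$ as $\det M_{ASM}$ up to an explicit prefactor. Equivalently, this determinant can be produced directly from the lambda-determinant \eqref{lambdadet} by specializing the entries $a_{i,j}$ and the deformation so that the three statistics $t$, $\mathrm{Inv}-N$ and $N$ are read off independently, with the $T$-system \eqref{defoT} furnishing the recursion that closes the determinantal form. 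The decisive step is then to show $\det M_{ASM} = \det M_{DPP}$, which I expect to follow by recognizing both matrices as truncations --- finite principal blocks --- of one common generating determinant, connected by an explicit sequence of determinant-preserving row and column operations.

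This is where the main obstacle lies. The unrefined count ASM $=$ DPP (the case $x=y=z=1$) is already a celebrated theorem whose first proofs required heavy machinery, and the three-parameter statement is strictly stronger: the statistics $(t,\mathrm{Inv}-N,N)$ on ASMs and $(M,NS,S)$ on DPPs are matched by no simple bijection, so the correspondence must be forced at the level of the determinants themselves. The hard part is therefore to identify the correct common truncated determinant and to verify that the entrywise closed forms on the two sides --- arising from genuinely different sources, non-intersecting paths versus the six-vertex/$T$-system recursion --- are related by an explicit, parameter-respecting linear change of basis. Once the two determinants are recognized as the same truncated object, comparing the base cases $n=1,2,3$ (for which the excerpt already exhibits the agreement $1+y+zy+zxy+zy^2+z^2y^2+z^2y^3$) fixes all normalizations and completes the argument.
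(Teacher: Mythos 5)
Your outline follows the paper's strategy faithfully --- LGV determinant for DPPs, Izergin--Korepin determinant in the homogeneous limit for ASMs, and a comparison of the two as truncations of infinite matrices --- but the step you yourself flag as ``where the main obstacle lies'' is precisely the step the paper's proof consists of, and you do not supply it. The missing idea is concrete: one writes down the double generating functions $f_{M_{ASM}}(u,v)$ and $f_{M_{DPP}}(u,v)$ of the two \emph{infinite} matrices (both built from the same Lorentzian-type kernel $1/(1-xu-v-(y-x)uv)$), and observes the $n$-independent identity
$$\Bigl(1-\tfrac{u}{1-\nu}\Bigr)(1-v)\,f_{M_{ASM}}(u,v)=(1-u)\bigl(1-(1-\nu)v\bigr)\,f_{M_{DPP}}(u,v),$$
where $\nu$ solves $x\nu(1-\nu)=\nu+y(1-\nu)$. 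Multiplying a generating function by $(1-au)$ on the left and $(1-bv)$ on the right is multiplication of the matrix by the unitriangular shift factors $\mathbb I-aS$ and $\mathbb I-bS^t$; because lower/upper triangular matrices respect truncation to the first $n$ rows and columns and unitriangular ones have determinant $1$, every truncated determinant of $M_{ASM}$ equals that of $M_{DPP}$. Your phrase ``an explicit sequence of determinant-preserving row and column operations'' gestures at this but does not identify the mechanism, and without it the argument does not close. Your closing claim that checking $n=1,2,3$ ``fixes all normalizations and completes the argument'' is not a proof of anything for general $n$.

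Two further points. For the full three-parameter statement the paper must modify the last column of each matrix to carry the $z$-dependence; the resulting infinite matrices $M_{ASM}'$ and $M_{DPP}'$ then depend explicitly on $n$ and both determinants acquire the same prefactor $1+\nu(z-1)$, after which an analogous $(1+(y-x\nu-1)u)(1-v)$ versus $(1-u)(1+(\nu-1)v)$ identity between their generating functions finishes the proof --- your proposal does not anticipate that the $z$-refinement breaks the $n$-independence of the matrices. Also, your suggested alternative route through the lambda-determinant and the $T$-system \eqref{defoT} is not used in the paper's proof and would not obviously produce a determinant formula for the refined count; the $T$-system appears only as motivation for the definition of ASMs.
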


This was finally proved in all its generality in \cite{BDZJ1}, and then generalized so as to include
yet another observable in \cite{BDZJ2}. In the present note, we explain the rationale behind
these proofs which strongly rely on manipulations of finite truncations of infinite matrices.

For simplicity of exposition we shall start with the identity between the doubly refined partition functions
$Z_{ASM}^{(n)}(x,y,1)$ and $Z_{DPP}^{(n)}(x,y,1)$. Each will be expressed as the determinant of a finite
truncation (of size $n\times n$) of an infinite matrix, and the identity between determinants will be
derived from general principles relating the two ``infinite" matrices. One key ingredient is the use of
the double generating series for the matrix entries (see Appendix \ref{genapp} for definitions and properties).

\subsection{Outline}

The use of infinite matrices is somewhat
non-standard in this context, and we would like to stress the power and beauty of the method.
The infinite matrices occurring here actually involve some fundamental object that came up in the study of so-called
Lorentzian triangulations \cite{LORGRA}, giving rise to one of the simplest examples of quantum
integrable system. More precisely, random configurations of this particular class of triangulations may be generated by
iterated powers of a transfer matrix $T$ of  infinite size. The problem was solved exactly by diagonalization
of $T$ in \cite{LORGRA}. A drastic simplification of the problem comes from the existence
of  an infinite parametric family of such transfer matrices, which all commute with each other. 

The notes are organized as follows.

In Section 2, we recall a number of facts about the transfer matrix of $1+1$-dimensional Lorentzian
triangulations, including other applications to trees and lattice path enumeration. 

In Section 3, we
compute $Z_{ASM}^{(n)}(x,y,1)$ by use of the Izergin-Korepin (IK) \cite{IZ,KO} determinant formulation of the
partition function of the bijectively equivalent configurations of the 6 Vertex (6V) model with
Domain-Wall Boundary Conditions (DWBC). The difficulty here is to extract a homogeneous limit out of
the IK determinant, and to put it in the form of the determinant of a finite truncation to size $n\times n$ 
of an infinite matrix which is independent of $n$. 

In Section 4, we compute $Z_{DPP}^{(n)}(x,y,1)$
by use of the lattice path formulation of the problem \cite{LA}, and by use of the
Lindstr\"om Gessel-Viennot (LGV) determinant formula for the partition function of
non-intersecting families of lattice paths. This expresses  $Z_{DPP}^{(n)}(x,y,1)$ as the determinant
of the finite truncation to size $n\times n$ of an infinite matrix independent of $n$. 

In Section 5, we show how the relation between the double generating functions
of the two infinite matrices above implies the identity between the determinants
of any finite truncation thereof. This is the key to the proof of the ASM-DPP conjecture. 
We then show how this has to be adapted to include more refinements.

In the conclusive Section 6, the ASM-DPP correspondence is placed in the wider context
of the myriad of combinatorial objects and physical systems connected to ASMs.
We also compare the two very different forms of quantum integrability underlying the ASM-DPP correspondence,
one coming from the Lorentzian triangulations, the other from the 6V model. 

We collect the useful formulas and definitions for generating functions and truncated determinants of infinite
matrices in Appendix \ref{genapp}.

\noindent{\bf Acknowledgments.} These notes are largely based on work with E. Guitter, C. Kristjansen,
R. Behrend and P. Zinn-Justin. 
I thank the Mathematical Sciences Research Instiitute, Berkeley, California
for hosting me while these notes  were completed.

\section{The main actor: the transfer matrix of Lorentzian gravity}

\subsection{1+1D Lorentzian gravity}
Discrete models for 1+1D Lorentzian gravity are defined as follows.  They are statistical models
whose configurations are discrete space-times, in the form of random triangulations
with a regular discrete time direction (an integer segment $[t_1,t_2]\subset \N$) 
and a random space direction, modeled
by random triangulations of the unit time strips $[t,t+1]$, $t\in \N$, by arbitrary but finite numbers of triangles with
one edge along the time line $t$ (resp. $t+1$) and the opposite vertex on the time line $t+1$ (resp. $t$).
All other edges are then glued to their neighbors so as to form a triangulation. 
Each ``horizontal edge" along a time line $t$ is shared by two triangles, one in each time slice $[t-1,t]$ and $t,t+1]$.
The boundary conditions along the time lines may be taken
free, periodic or staircase-like \cite{LORGRA}.
A typical free boundary Lorentzian triangulation $\Theta$  in 1+1D reads as follows:
$$ \raisebox{-1.cm}{\hbox{\epsfxsize=8.cm \epsfbox{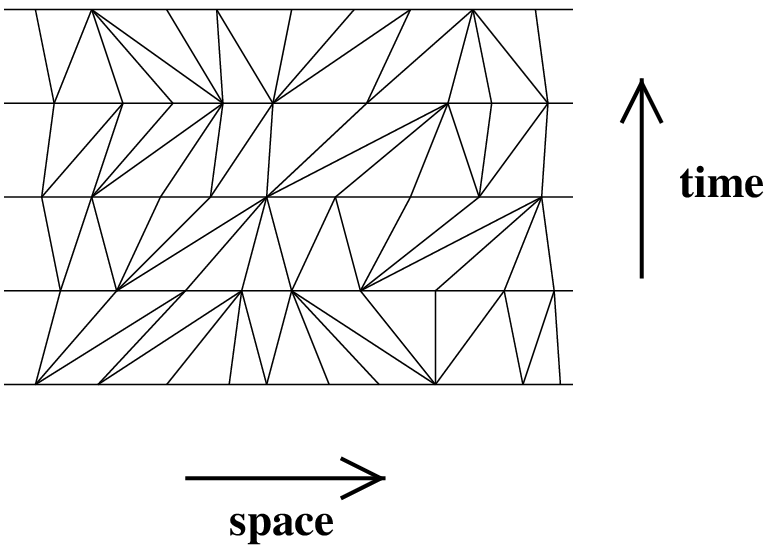}}}  $$

These triangulations are best described in the dual picture by considering triangles as vertical half-edges
and pairs of triangles that share a time-like (horizontal) edge as vertical edges between two consecutive time-slices.
We may now concentrate on the transition between two consecutive time-slices which typically reads as follows:
\begin{equation}\label{transmatbare} \raisebox{-1.cm}{\hbox{\epsfxsize=8.cm \epsfbox{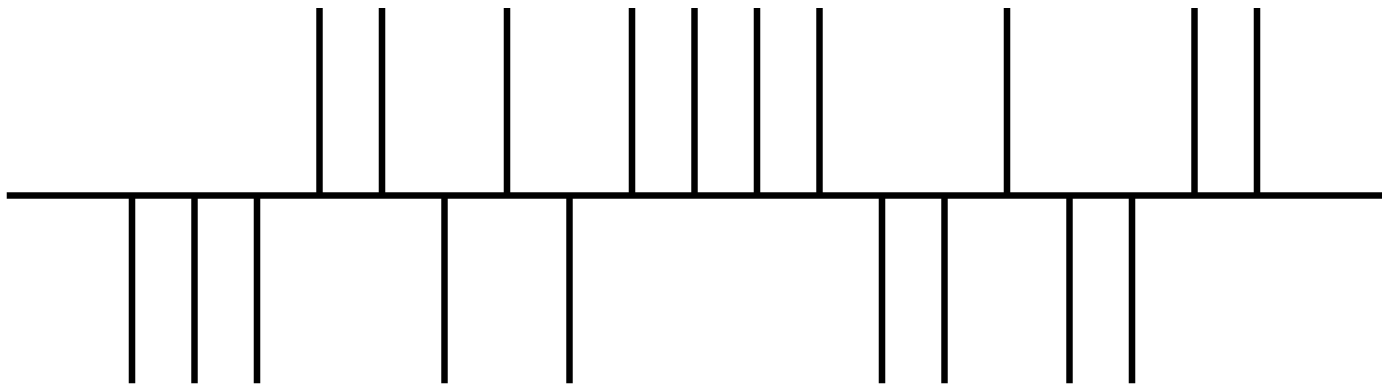}}}  \end{equation}
with say $i$ half-edges on the bottom and $j$ on the top (here for instance we have $i=9$ and $j=10$). 
Denoting by $\vert i\rangle$ and $\vert j\rangle$, with\footnote{Here and throughout the paper, 
we use the notation $\Z_+=\{0,1,2,...\}$.}
$i,j\in \Z_+$, the bottom and top Hilbert space state bases, we may describe the generation of a triangulation 
by the iterated action of a transfer operator ${\mathcal T}$ with matrix elements $T_{i,j}={i+j\choose i}$
between states $\vert i\rangle$ and $\vert j\rangle$.
Note that the corresponding matrix $T=(T_{i,j})_{i,j\in \Z_+}$ is infinite. 
We shall deal with such matrices in the following.
As detailed in Appendix \ref{genapp},
a compact characterization of the infinite matrix $T$ is via its double generating function:
$$ f_T(u,v)=\sum_{i,j\geq 0} T_{i,j} u^i v^j =\frac{1}{1-u-v} \, .$$

\subsection{Integrability}

To make the model more realistic, we may include both area and curvature-dependent terms, by introducing
Botlzmann weights $w(\Theta)$ equal to the product of local weights of the form $g$ per triangle (area term)
and $a$ per pair of consecutive triangles in a time-slice pointing in the same direction, either both up or both down
(curvature term).
The rules in the dual picture are as follows:
$$\raisebox{-1.cm}{\hbox{\epsfxsize=6.cm \epsfbox{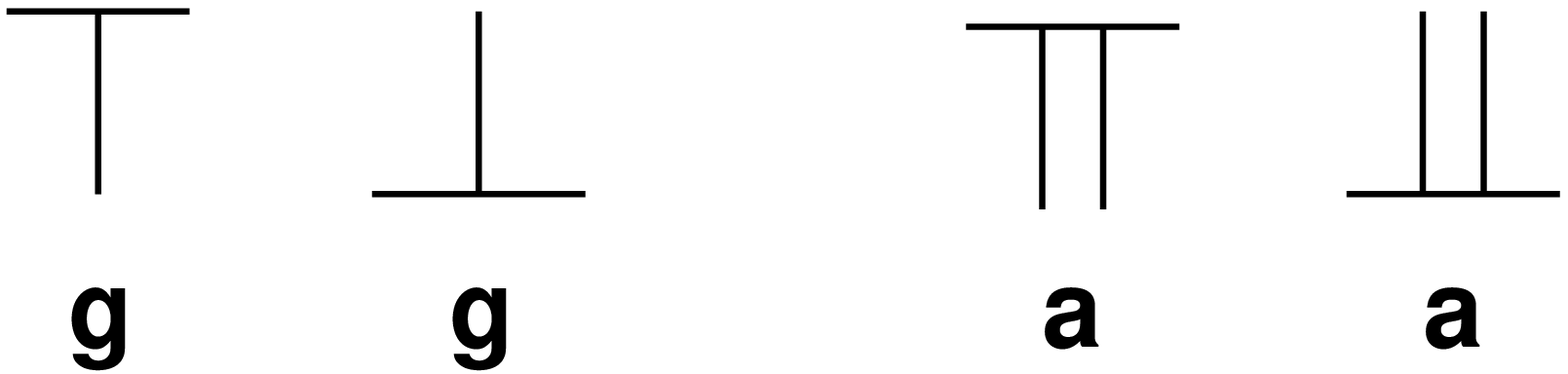}}}   $$
For instance, in the example \eqref{transmatbare} above with $i=9$ and $j=10$, 
the product of local weights is $g^{19} a^{9}$.
For the staircase-like boundary conditions of \cite{LORGRA},
namely assuming that each state-to-state transition as in \eqref{transmatbare}
has at least one leftmost half-edge on the bottom and one rightmost half-edge on top 
(and not counting the leftmost and rightmost half-edge weights $g^2$),
it is easy to compute the new transfer operator ${\mathcal T}(g,a)$ with
matrix elements 
\begin{equation}\label{Tga}
T(g,a)_{i,j}= (ag)^{i+j}\, \sum_{k=0}^{{\rm Min}(iij)} {i\choose k}{j\choose k} a^{-2k} 
\end{equation}
for $i,j\geq 0$ which expresses the transition between states $\vert i+1\rangle$ and $\vert j+1\rangle$.
Equivalently, the double generating function reads:
\begin{equation}\label{simple} f_{T(g,a)}(u,v)=\sum_{i,j\geq 0} T(g,a)_{i,j} u^i v^j =\frac{1}{1-ga(u+v)-g^2(1-a^2)u v} 
\end{equation}
and we have $T(g,a)=T(ga,ga,g^2(1-a^2))$ in the notations of Appendix \ref{genapp}.

This model turns out to provide one of the simplest examples of quantum integrable system, with an infinite
family of commuting transfer matrices. Indeed, we have:

\begin{thm}\cite{LORGRA}
The transfer matrices $T(g,a)$ and $T(g',a')$ commute if and only if the parameters $(g,a,g',a')$ are such that
$\varphi(g,a)=\varphi(g',a')$ where:
\begin{equation}\label{lorg} \varphi(g,a)=\frac{1-g^2(1-a^2)}{a g}
\end{equation}
\end{thm}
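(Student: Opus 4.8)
The plan is to reduce the commutation statement to a symmetry property of a single double generating function, which can then be evaluated explicitly, and to read off the condition from its coefficients.

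First I would exploit the fact that each $T(g,a)$ is a \emph{symmetric} matrix: its double generating function \eqref{simple} is manifestly invariant under $u\leftrightarrow v$, so $T(g,a)_{i,j}=T(g,a)_{j,i}$. Writing $A=T(g,a)$ and $B=T(g',a')$, symmetry gives $(AB)^{\rt}=B^{\rt}A^{\rt}=BA$, so that $AB=BA$ holds \emph{if and only if} $AB$ is itself symmetric. Since an infinite matrix is determined by its double generating function (its entries are the Taylor coefficients), this is in turn equivalent to the single scalar identity $f_{AB}(u,v)=f_{AB}(v,u)$.

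Next I would compute $f_{AB}$ from $f_A$ and $f_B$ via the product rule for double generating functions recorded in Appendix \ref{genapp}: a matrix product contracts the inner index, which at the level of generating series amounts to a contour integral of the form
$$f_{AB}(u,v)=\oint \frac{ds}{2\pi i\, s}\, f_A(u,s)\, f_B(1/s,v).$$
Setting $\alpha=ga$, $\beta=g^2(1-a^2)$ and $\alpha'=g'a'$, $\beta'=g'^2(1-a'^2)$, both factors are rational in $s$ with a single relevant pole; picking up the residue at the pole lying inside the contour — the one compatible with the power-series expansion at $u=v=0$ — and simplifying yields
$$f_{AB}(u,v)=\frac{1}{(1-\alpha\alpha')-(\alpha+\alpha'\beta)\,u-(\alpha'+\alpha\beta')\,v+(\alpha\alpha'-\beta\beta')\,uv}.$$
Reading off the symmetry condition is then immediate: the constant term and the $uv$-coefficient are already symmetric, so $f_{AB}(u,v)=f_{AB}(v,u)$ holds precisely when the coefficients of $u$ and $v$ coincide, i.e. $\alpha+\alpha'\beta=\alpha'+\alpha\beta'$, equivalently $\alpha(1-\beta')=\alpha'(1-\beta)$, equivalently $(1-\beta)/\alpha=(1-\beta')/\alpha'$. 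Substituting back $\alpha=ga$ and $\beta=g^2(1-a^2)$ gives $(1-\beta)/\alpha=(1-g^2(1-a^2))/(ga)=\varphi(g,a)$, so the condition is exactly $\varphi(g,a)=\varphi(g',a')$, as claimed; both implications follow at once, with no case analysis.

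I expect the main obstacle to be the careful bookkeeping of the contour-integral product formula of Appendix \ref{genapp} — in particular, justifying the convergence of the contracted sum over the inner index and selecting the correct residue so that the result is the genuine power series for $AB$ at the origin (rather than picking up a spurious contribution from the other pole). Once the explicit rational form of $f_{AB}$ is established, the equivalence reduces to the trivial comparison of two coefficients, and the algebraic identification with $\varphi$ is purely routine.
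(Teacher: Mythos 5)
Your argument is correct and follows essentially the same route as the paper: the paper's proof is precisely "use the generating functions", i.e.\ compute the product $T(g,a)T(g',a')$ via the convolution/contour formula of Appendix \ref{genapp} (equivalently \eqref{propsix}) and identify the commuting family $T_{1,t}(\al)$ with $t=\varphi(g,a)$, $\al=ga$. Your explicit rational form of $f_{AB}$ matches what \eqref{propsix} gives, and the observation that symmetry of $A$ and $B$ reduces $AB=BA$ to symmetry of the single series $f_{AB}(u,v)$ is a clean way to get both directions of the equivalence from one computation.
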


This is easily proved by using the generating functions. This case corresponds to the family of matrices
$T_{s,t}(\al)$ of Appendix \ref{genapp}, with $s=1$, $t=\varphi(g,a)$ and $\al=g a$.

\subsection{Diagonalization}\label{diagoT}

To diagonalize $T$, we may first consider finite size truncations $T^{[0,k]}=\big(T_{i,j}(g,a)\big)_{i,j\in [0,k]}$.
It is easy to see that the $(k+1)\times (k+1)$ symmetric matrix $T^{[0,k]}$ is diagonalizable, with eigenvalues
$\lambda_i^{[0,k]}(g,a)=g^{2i}(1+O(g^2))$, $i\in [0,k]$, all with formal series expansions in powers of $g$
with coefficients in $\Z[a]$. As $k$ increases, we get more and more eigenvalues, with power
series expansions that stabilize. In this sense, the limiting infinite matrix has an infinite set of eigenvalues
$\lambda_i=g^{2i}(1+O(g^2))$, $i\in\Z_+$, with well-defined formal power series expansions in $g$.

Setting $\varphi(g,a)=q+q^{-1}$ for some $q\in \C^*$, and introducing a new variable 
$$\lambda=\frac{1- q^{-1} g a}{1-q g a} \, ,$$
we may rewrite the double generating function of $T(g,a)$ as:
\begin{eqnarray}\label{gentga}
f_{T(g,a)}(u,v)&=&\frac{1- \lambda q^2}{(1-q u)(1-q v)-\lambda (u-q)(v-q)}\nonumber \\
&=&
\sum_{m=0}^\infty  \frac{\sqrt{1-q^2}(q-u)^m}{(1-q u)^{m+1}}
\left(\frac{1-\lambda q^2}{1-q^2}\lambda^m\right) \frac{\sqrt{1-q^2}(q-v)^m}{(1-q v)^{m+1}}
\end{eqnarray}
where we identify 
$$\Lambda^{(m)}=\frac{1-\lambda q^2}{1-q^2}\lambda^m$$
as the $m$-th eigenvalue of $T(g,a)$, $m\in\Z_+$ and
$$f_{v^{(m)}(u)}=\sum_{i=0}^\infty v_i^{(m)} u^i = \frac{\sqrt{1-q^2}(q-u)^m}{(1-q u)^{m+1}}$$
as the generating function for the corresponding eigenvector $v^{(m)}$. Note that $(v^{(m)})_{m\in \Z_+}$
form an orthonormal basis of the Hilbert space of states 
w.r.t. the standard scalar product $u\cdot v=\sum_{i\in\Z_+}u_i v_i$.
It is easy to show that $\Lambda^{(m)}=g^{2m}(1+O(g^2))$ as a formal power series of $g$, thereby proving that
these are the limits of the eigenvalues of the truncated matrices as the size $k\to \infty$.

This was extensively used in  \cite{LORGRA} to compute correlation
functions of top/bottom boundary loops in random Lorentzian triangulations. 
We want to stress here the very simple form of the generating function \eqref{simple}, 
which will reappear later in these notes.

\subsection{Trees}

For staircase boundary conditions, the dual random Lorentzian triangulations introduced above may
be viewed as random plane trees. This is easily realized by gluing all the bottom vertices of parallel vertical edges
whose both top and bottom halves contribute to the curvature term (no interlacing with the neighboring time slices).
A typical such example reads:
$$ \raisebox{-1.cm}{\hbox{\epsfxsize=10.cm \epsfbox{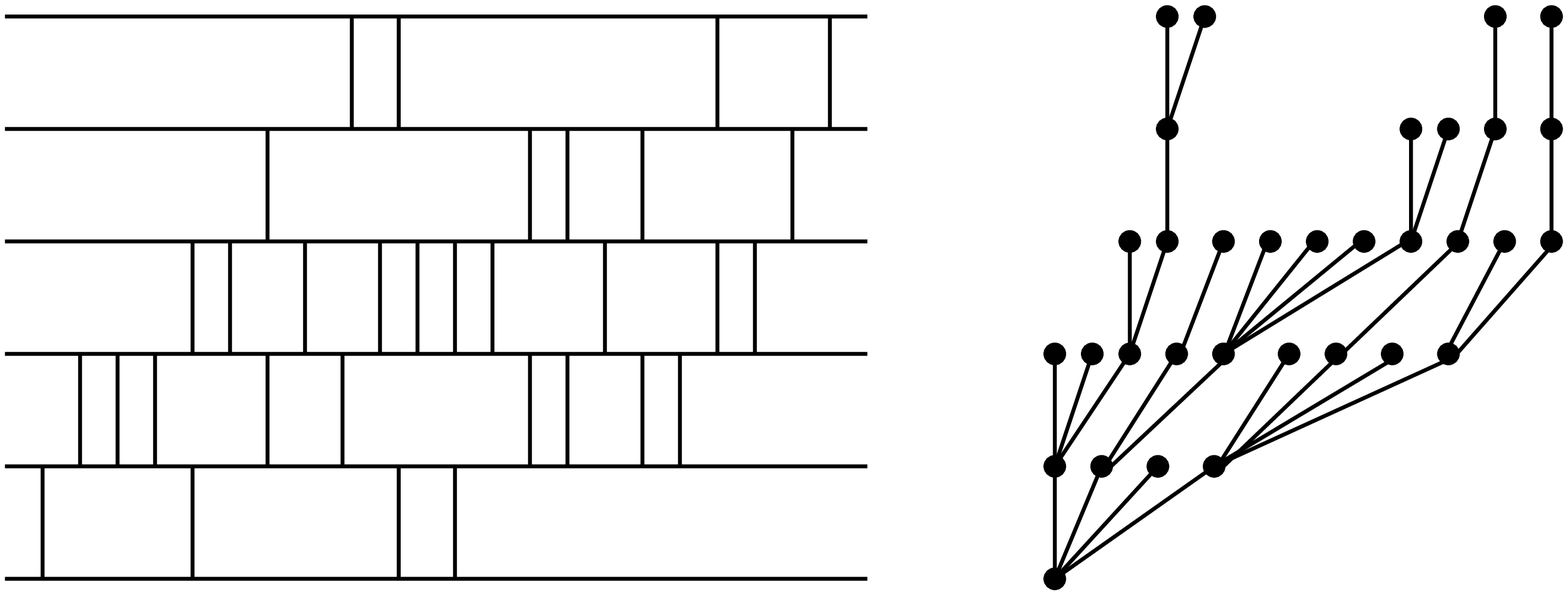}}}$$
Note that the tree is naturally rooted at its bottom vertex.

To summarize, we have unearthed some integrable structure attached naturally to certain plane trees. 
Note that in tree language the weights are respectively $g^2$ per edge 
(except the left/right boundary ones), and $a$ per pair of consecutive descendent half-edges (from left to right) and
per leaf.

\subsection{Paths}\label{pathLU}

There is yet another interpretation of the transfer matrix $T(g,a)$ of $1+1D$ Lorentzian gravity,
in terms of lattice paths. First notice that $T(g,a)=V(g,a)V^t(g,a)$ for some (infinite)
lower triangular matrix $V(g,a)$ with entries
\begin{equation}\label{lowerL}
V(g,a)_{i,k}= g^i a^{i-k} {i\choose k} \qquad (i,k \in \Z_+)
\end{equation}
and double generating function:
\begin{equation}\label{genL}
f_{V(g,a)}(u,v)= \frac{1}{1-a g u-g u v}
\end{equation}
In the notations of Appendix \ref{defapp}, we have $V(g,a)=L(a^{-1},ga)$.

\begin{figure}
\centering
\includegraphics[width=14.cm]{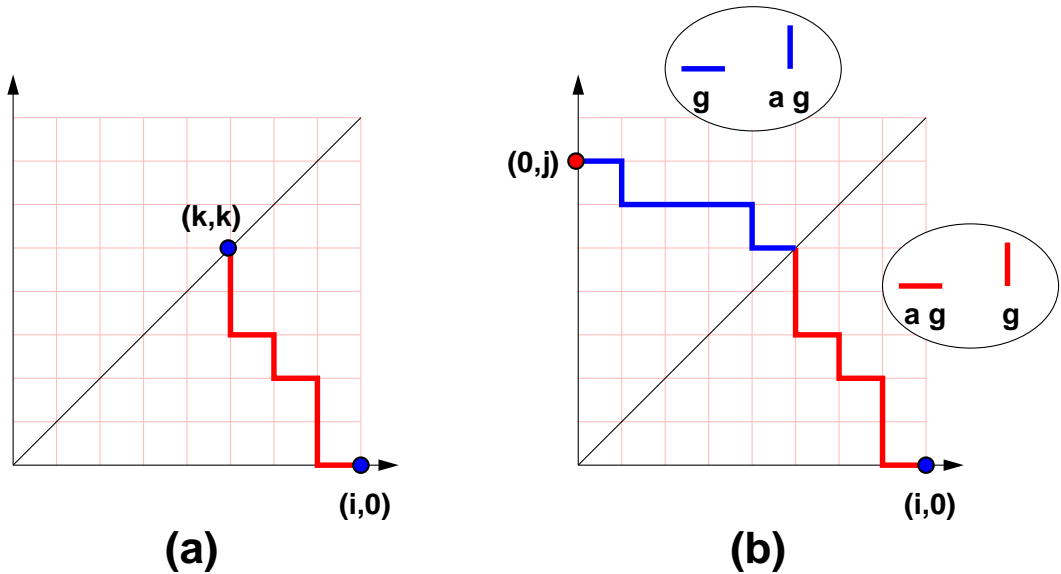}
\caption{\small A typical path contributing to the matrix element $V(g,a)_{i,k}$ (a), and to $T(g,a)_{i,j}$ (b).
These paths are taken on $\Z_+^2$, with steps $(-1,0)$ and $(0,1)$.
In the latter case, we have indicated the weights of the steps below and above the diagonal.}
\label{fig:tupaths}
\end{figure}

Consider paths on the positive quadrant of the two-dimensional square lattice $\Z+^2$, with steps
$(-1,0)$ (horizontal) and $(0,1)$ (vertical), as illustrated in Fig.\ref{fig:tupaths}(a). 
Then the total number of paths from the point $(i,0)$
to the point $(k,k)$ on the diagonal is ${i \choose k}$. Moreover if we attach a weight $g a$ per
horizontal step and $g$ per vertical one, we get a total contribution of 
$(g a)^{k-i} g^k{i \choose k}=V(g,a)_{i,k}$, which we interpret as the partition function for 
weighted paths
from $(i,0)$ to $(k,k)$. This quantity will also reappear later.

Note that in this language $T(g,a)_{i,j}$ is the partition
function of lattice paths from $(i,0)$ to $(0,j)$ in the positive quadrant, with weights $g a$ (resp. $g$) per
horizontal step below (resp. above) the diagonal and $g$ (resp. $g a$) per step above (resp. below)
the diagonal (see Fig.\ref{fig:tupaths}(b) for an illustration).

This formulation allows to visualize immediately the truncated transfer matrix $T^{[0,k]}(g,a)$ as 
corresponding to paths within the square $[0,k]\times[0,k]\subset\Z_+^2$. For such paths, both the
portion below the diagonal and that above are within the same square, so that we may write
$T^{[0,k]}(g,a)=V^{[0,k]}(g,a)V^{[0,k]}(g,a)^t$, wich immediately yields the determinant of $T^{[0,k]}$,
as $V^{[0,k]}(g,a)$ is lower triangular (see also Appendix \ref{detapp}):
$$ \det\big(T^{[0,k]}(g,a) \big)= \det\big(V^{[0,k]}(g,a) \big)^2=g^{k(k+1)} $$
This is compatible with eigenvalues $\lambda_i^{[0,k]}(g,a)=g^{2i}(1+O(g^2))$ for $i=0,1,...,k$.

\section{Enumerating ASMs}

\subsection{ASMs, 6V model and the IK deteminant}

As discovered by Kuperberg \cite{KUP}, ASMs of size $n\times n$ are in bijection with the so-called 
Domain-Wall Boundary Condition Six Vertex (6V-DWBC) model on a square grid of size $n\times n$.
The latter configurations are choices of orientations of the edges of a $n\times n$ grid of the 
(two-dimensional) square lattice, in such a way that at each vertex exactly two edges point to-
and two point from- the vertex. Moreover oriented external horizontal (resp. vertical) edges are 
attached to the boundary vertices, in such a way that external horizontal edges point towards the grid
and vertical ones from the grid. We display below the 6 possible vertex configurations $a_1,a_2,b_1,b_2,c_1,c_2$
obeying the above rules,  as well as a sample grid showing the external edge boundary condition:
$$\raisebox{-1.cm}{\hbox{\epsfxsize=14.cm \epsfbox{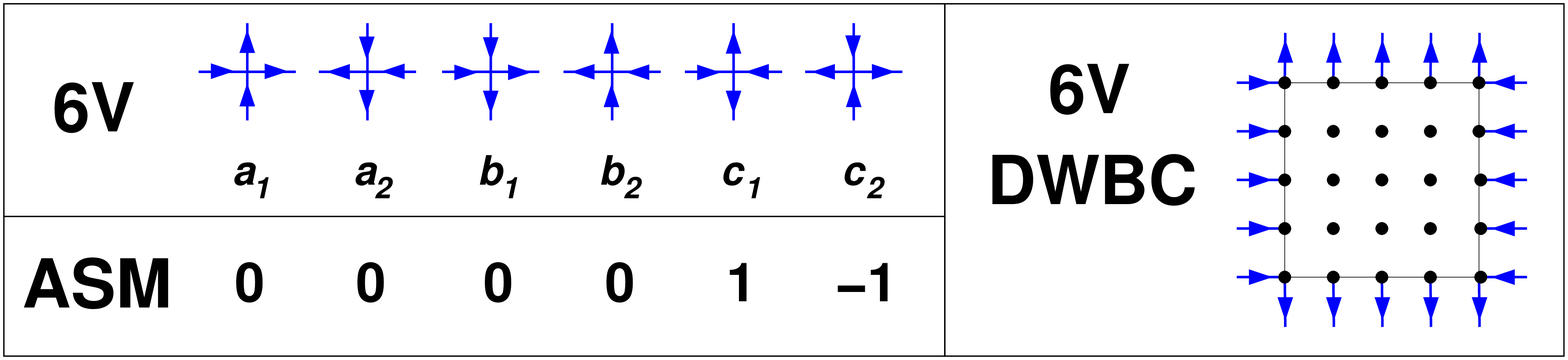}}} 
$$
We have also indicated the dictionary between the vertex configurations and the ASM entries. It is easy to understand
the bijection as follows. The $\pm 1$ entries correspond to vertices where both the horizontal and vertical flows
(indicated by the direction of the edges) are {\it reflected}. The alternation of $1$ and $-1$ entries corresponds 
to odd and even order flips along each row or column of the grid. The DWBC boundary condition
ensures that the first and last encountered non-zero elements in the ASM
along rows and columns must be $1$, as one needs an odd total number of flips to reverse the external edge
orientation, when going along a row or a column.

The 6V model had been extensively studied in the physics literature. With a suitable parameterization of the Boltzmann
weights $a_i,b_i,c_i$, the model forms the archetypical example of an integrable lattice model, as it admits an infinite family
of commuting transfer matrices, that can be diagonalized for various types of bopundary conditions using the Bethe Ansatz 
techniques.  These integrable weights are defined as follows.
Each row (resp. column)
of the grid carries a complex number $z_i$, $i=1,2,...n$ (resp. $w_i$, $i=1,2,...n$ ) called spectral parameter.
Moreover the weights depend on a ``quantum" parameter $q\in \C^*$.
We have the following parametrization of the weights:
$$ a(z,w)=q z-q^{-1} w \qquad b(z,w)=q^{-1} z -q w \qquad c(z,w)=(q^2-q^{-2}) \sqrt{z w} $$
where $a(z,w)$ is the weight for a vertex of type $a_1$ or $a_2$ at the intersection of a line with
parameter $z$ and column with parameter $w$, etc. With this parameterization, the model has an infinite
family of commuting row-to-row transfer matrices, and can be exactly solved by Bethe Ansatz techniques.
Using recursion relations of Korepin \cite{KO}, Izergin \cite{IZ} obtained a compact determinantal formula for the partition function
of this 6V-DWBC model, defined as the sum over edge configurations of the product of local vertex weights,
divided by the normalization factor $\prod_{i=1}^n c(z_i,w_i)$ (to make the answer polynomial in the $z$'s and $w$'s).
It reads:
\begin{equation}\label{IK} 
Z_{6V}^{(n)}(q;\{z_i\},\{w_j\})=\frac{\prod_{i,j} a(z_i,w_j)b(z_i,w_j)}{\Delta(z)\Delta(w)}
\det_{1\leq i,j\leq n} \left( \frac{1}{a(z_i,w_j)b(z_i,w_j)}\right)
\end{equation}
where $\Delta(z)=\prod_{1\leq i<j\leq n}(z_i-z_j)$ stands for the Vandermonde determinant of the $z$'s.

\subsection{Homogeneous limit and computation of $Z_{ASM}^{(n)}(x,y,1)$}

In the above bijection between 6V configurations and ASMs, it is easy to track both quantities 
$N(B)$ and ${\rm Inv}(B)$ in terms of 6V weights.
We find that 
$$N(B)= N_{c_2}=\frac{N_c-n}{2} \qquad {\rm Inv}(B)-N(B)=N_{a_1}=N_{a_2}=\frac{N_{a}}{2}$$
where $N_{a_i}$, $N_{b_i}$, $N_{c_i}$ stand for the total numbers of vertex configurations of each type.
The determinant result above can therefore be used to compute the refined partition function $Z_{ASM}^{(n)}(x,y,1)$
for ASMs, which counts ASMs with a weight $x/y$ per entry $-1$ and a weight $y$ for each inversion. Setting
\begin{equation}\label{xyab}
x=\left( \frac{c}{b}\right)^2\qquad y=\left( \frac{a}{b} \right)^2 
\end{equation}
we have:
\begin{equation}\label{zasm} 
Z_{ASM}^{(n)}(x,y,1)=\sum_{ASM\, B} x^{N(B)} y^{{\rm Inv}(B)-N(b)}=b^{-n(n-1)} \, Z_{6V}^{(n)}(a,b,c)
\end{equation}
where $Z_{6V}^{(n)}(a,b,c)$ refers to the homogeneous limit of the partition function \eqref{IK}
of the 6V model in which all $a(z_i,w_j)$
tend to a, etc. This is obtained by letting all $z_i\to r$ and all $w_i\to r^{-1}$, 
with $a=a(r,r^{-1})$, $b=b(r,r^{-1})$ and $c=c(r,r^{-1})$.
This and more refinements were worked out in \cite{BDZJ1}. We have the following remarkable result:

\begin{thm}\cite{BDZJ1}\label{asmthm}
The partition function for refined ASMs reads:
\begin{equation}\label{asmdet}
Z_{ASM}^{(n)}(x,y,1)= \det_{0\leq i,j \leq n-1} \left ( (1-\nu){\mathbb I}+\nu G\right) 
\end{equation}
where $\nu$ is any solution to the equation
\begin{equation}\label{nudef} x \nu(1-\nu)=\nu +y(1-\nu)\end{equation}
and the $n\times n$ determinant is the principal minor for the $n$ first rows and columns of the infinite
matrix $ M_{ASM}=(1-\nu){\mathbb I}+\nu G$ whose entries are generated by
\begin{equation}\label{fasm} f_{M_{ASM}}(u,v)=\frac{1-\nu}{1-u v}+\frac {\nu}{1-xu-v -(y-x)u v} 
\end{equation}
\end{thm}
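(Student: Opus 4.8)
The plan is to start from the Izergin--Korepin formula \eqref{IK} and carry out the homogeneous limit $z_i\to r$, $w_j\to r^{-1}$ explicitly. In that limit both Vandermondes $\Delta(z),\Delta(w)$ and the determinant $\det_{i,j}\big(1/(a(z_i,w_j)b(z_i,w_j))\big)$ vanish, so the quotient is a $0/0$ indeterminate. First I would resolve it with the standard confluent-alternant technique: dividing an alternant by its Vandermonde and sending the nodes together replaces evaluation at distinct points by successive derivatives, so that $\det_{i,j}K(z_i,w_j)/(\Delta(z)\Delta(w))$ tends to $\det_{0\le i,j\le n-1}\big(\tfrac{1}{i!j!}\partial_z^i\partial_w^j K\big)$ evaluated at $(r,r^{-1})$; the factorials produced by the confluence are exactly absorbed into the Taylor normalisation, leaving no stray constant. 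Applied to $K=1/(ab)$ and combined with the homogeneous prefactor from \eqref{IK} and the factor $b^{-n(n-1)}$ of \eqref{zasm}, this already presents $Z_{ASM}^{(n)}(x,y,1)$, up to an explicit scalar prefactor, as the $n\times n$ leading principal minor of the infinite matrix whose double generating function (in auxiliary variables $\xi,\eta$) is $\phi(r+\xi,r^{-1}+\eta)=1/\big(a(r+\xi,r^{-1}+\eta)\,b(r+\xi,r^{-1}+\eta)\big)$.

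The two-term shape of \eqref{fasm} should then be produced by the factorisation $a(z,w)b(z,w)=(qz-q^{-1}w)(q^{-1}z-qw)$. Performing a partial-fraction decomposition of $1/(ab)$ splits the kernel into two pieces, each with a denominator \emph{linear} in $(z,w)$; after the shift $z=r+\xi$, $w=r^{-1}+\eta$ each piece becomes the reciprocal of an affine function of $\xi,\eta$. One of these pieces is destined to become the identity contribution $\tfrac{1-\nu}{1-uv}$ and the other the matrix $G$ with generating function $\tfrac{1}{1-xu-v-(y-x)uv}$; note that $\mathbb{I}$ itself has generating function $1/(1-uv)$, which is precisely why an affine combination $(1-\nu)\mathbb{I}+\nu G$ is the natural target.

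To pass from the affine-in-$\xi,\eta$ kernel to the bilinear denominators of \eqref{fasm} I would invoke the transformation lemmas of Appendix \ref{genapp}: left multiplication by a lower-triangular matrix and right multiplication by an upper-triangular matrix leave every leading principal minor unchanged up to the product of the corresponding diagonal entries, and in generating-function language these operations realise analytic (here fractional-linear) substitutions $\xi=\zeta(u)$, $\eta=\omega(v)$ fixing the origin, together with prefactor functions $\rho(u),\sigma(v)$. The aim is to choose the \emph{single} pair $(\zeta,\omega)$ and the prefactors so that the first partial-fraction term is sent to exactly $1/(1-uv)$ while the second is sent to $1/(1-xu-v-(y-x)uv)$, with $x=(c/b)^2$ and $y=(a/b)^2$ as in \eqref{xyab}. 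Along the way one must verify that all accumulated scalar factors, namely the homogeneous prefactor, the $b^{-n(n-1)}$ of \eqref{zasm}, the residues of the partial-fraction decomposition and the diagonal factors of the triangular changes of variable, conspire to cancel, leaving the bare determinant \eqref{asmdet}.

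The hard part, and the place where the hypothesis \eqref{nudef} enters, is precisely this \emph{simultaneous} normalisation: there is only one pair of reparametrisations $(\zeta,\omega)$ at our disposal, yet it must bring \emph{both} partial-fraction terms into canonical form at once. Requiring the first term to equal the identity's $1/(1-uv)$ rigidly fixes $\zeta$, $\omega$ and the prefactors; the relative weight of the two terms is then forced to be $(1-\nu):\nu$, and the statement that the second term indeed lands on $1/(1-xu-v-(y-x)uv)$ becomes a single algebraic constraint, which I expect to be exactly $x\nu(1-\nu)=\nu+y(1-\nu)$. The two roots of this quadratic should correspond to the two ways of labelling the factors $a,b$ in the partial-fraction step, and must therefore yield the same determinant, consistent with the theorem's assertion that \emph{any} solution $\nu$ may be used. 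Checking this consistency, together with the full cancellation of prefactors, is the main obstacle; everything else is the bookkeeping of confluence and of the generating-function dictionary.
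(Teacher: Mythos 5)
Your proposal is correct and follows essentially the same route as the paper's proof: confluent homogeneous limit of the Izergin--Korepin determinant via Taylor expansion, partial-fraction splitting of the kernel $c/(ab)$ into two pieces, and a single lower/upper triangular sandwich preserving truncated determinants that simultaneously sends one piece to a multiple of ${\mathbb I}$ and the other to $G$, with the parameter matching yielding \eqref{nudef}. The only difference is presentational: the paper realizes your ``single pair of reparametrisations'' concretely through Lemma \ref{apdecomp}, factoring each piece as $A_\pm\propto (U_\pm^t)^{-1}U_\pm'$ and then normalizing by the $A_-$ factors, whereas you describe the same triangular operations as fractional-linear substitutions at the level of generating functions.
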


Note the remarkable similarity between the generating function for the matrix elements of $G$ and the 
that of the transfer matrix for $1+1D$ Lorentzian triangulations \eqref{simple}. The two actually match up to a rescaling
$u\to u/\sqrt{x}$ and $v\to v\sqrt{x}$ (which amounts to a conjugation by the diagonal matrix $\sqrt{x}\, \mathbb I$)
and upon identifying $x=g^2a^2$ and $y=g^2$. Note also that $G=T(x,1,y-x)$ in the notations of Appendix \ref{defapp}.

Let us now give a sketch of the proof of Theorem \ref{asmthm}. The determinant \eqref{IK} is singular in the homogeneous
limit, but we may Taylor-expand the matrix entries within the determinant around the homogeneous point. For $a=a(r,r^{-1})$,
$b=b(r,r^{-1})$ and $c=c(r,r^{-1})$ this 
reads:
$$Z_{6V}^{(n)}(a,b,c)=\frac{(a b)^{n^2}}{c^n} \, \det_{0\leq i,j\leq n-1}\left(\left\{ \left(\frac{1}{i!}\frac{d^i}{du^i}\right)
\left(\frac{1}{j!}\frac{d^j}{dv^j}\right)
\frac{c(u^{-1},v)}{a(u^{-1},v)b(u^{-1},v)}\right\}\Big\vert_{u=v=r^{-1}} \right) $$
Noting further that
$$ \frac{c(u^{-1},v)}{a(u^{-1},v)b(u^{-1},v)} = \frac{1}{u v-q^{-2}} -\frac{1}{u v -q^{2}} $$
and introducing the infinite matrices $A_{\pm}$ with elements:
\begin{equation}\label{matelem} (A_{\pm})_{i,j}=\left\{ \left(\frac{1}{i!}\frac{d^i}{du^i}\right)
\left(\frac{1}{j!}\frac{d^j}{dv^j}\right)
 \frac{1}{u v-q^{\pm 2}}\right\}\Big\vert_{u=v=r^{-1}} \qquad (i,j\in \Z_+)
\end{equation}
we have the following straightforward:
\begin{lemma}\label{apdecomp}
We have
\begin{equation}\label{taylorapm}
A_\pm=\frac{1}{r^{-2}-q^{\pm 2}}\, \left( U(\al_\pm,\beta_\pm)^t \right)^{-1} \, U(\al_\pm',\beta_\pm') 
\end{equation}
for $U(\al,\beta)$ the infinite upper triangular matrix with entries $U(\al,\beta)_{i,j}={j\choose i}\al^i\beta^j$ 
(see Appendix \ref{defapp}),
and where the parameters read: 
$$ \al_+ =\frac{1-q^2r^2}{r}\, , \quad \beta_+=\frac{q^2-q^{-2}}{r^2-q^2} \, , \quad \al_+'=-q^2r^2\beta_+\, ,\quad
\beta_+'=-\frac{1}{\al+} $$
and the parameters with $-$ index are obtained form those with $+$ by the substitution $q\to q^{-1}$.
\end{lemma}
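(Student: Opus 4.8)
The plan is to reduce the stated factorization to the cleaner, equivalent identity
\begin{equation}
U(\al_\pm,\beta_\pm)^t\, A_\pm=\frac{1}{r^{-2}-q^{\pm2}}\,U(\al_\pm',\beta_\pm'),
\end{equation}
from which \eqref{taylorapm} follows by left multiplication by $(U(\al_\pm,\beta_\pm)^t)^{-1}$, the matrix $U(\al_\pm,\beta_\pm)^t$ being lower triangular with nonvanishing diagonal $(\al_\pm\beta_\pm)^i$ and hence invertible. Phrasing the claim through $U^t A_\pm$ rather than through the inverse is deliberate: since $U(\al,\beta)^t$ is lower triangular, every entry $(U^tA_\pm)_{i,j}=\sum_{k\le i}(U^t)_{i,k}(A_\pm)_{k,j}$ is a finite sum, so no convergence question arises at this stage.

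First I would identify the double generating function of $A_\pm$. Because $(A_\pm)_{i,j}$ is by definition the coefficient of $(u-r^{-1})^i(v-r^{-1})^j$ in the Taylor expansion of $1/(uv-q^{\pm2})$ about $u=v=r^{-1}$, Taylor's theorem gives at once
\begin{equation}
f_{A_\pm}(u,v)=\sum_{i,j\ge0}(A_\pm)_{i,j}u^iv^j=\frac{1}{(r^{-1}+u)(r^{-1}+v)-q^{\pm2}}.
\end{equation}
Next I would pass to generating functions throughout, using the composition rule for left multiplication by $U(\al,\beta)^t$ from Appendix \ref{genapp}, namely
\begin{equation}
f_{U(\al,\beta)^tM}(u,v)=\frac{1}{1-\beta u}\,f_M\!\left(\frac{\al\beta u}{1-\beta u},\,v\right),
\end{equation}
which is itself a consequence of the elementary resummation $\sum_i\binom{i}{k}(\beta u)^i=(\beta u)^k/(1-\beta u)^{k+1}$.

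Applying this rule with $M=A_\pm$ converts the target into an equality of rational functions. Clearing the factor $1-\beta_\pm u$ and collecting powers, the denominator of the left-hand side is a polynomial of the form $D+c_u u+c_v v+c_{uv}uv$ with $D=r^{-2}-q^{\pm2}$, whereas the right-hand side $\tfrac{1}{D}U(\al_\pm',\beta_\pm')$ has generating function $\frac{1}{D(1-\beta_\pm'v-\al_\pm'\beta_\pm'uv)}$. Matching the coefficients of $1,u,v,uv$ then forces exactly $\al_\pm=r(r^{-2}-q^{\pm2})$, $\beta_\pm'=-1/\al_\pm$ and $\al_\pm'=-r^2q^{\pm2}\beta_\pm$, in agreement with the parameters listed in the statement; the remaining parameter $\beta_\pm$ enters only as an overall scale, so the quoted value is one admissible choice. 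The $-$ case is obtained from the $+$ case by $q\to q^{-1}$, as asserted.

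I expect the one genuine subtlety, and hence the point requiring care, to be the bookkeeping for infinite matrices rather than the algebra. Naive associativity is not available here: right-multiplying the reduced identity by $U(\al_\pm,\beta_\pm)$ would formally demand that $U(\al_\pm',\beta_\pm')U(\al_\pm,\beta_\pm)$ be symmetric, which is false for an upper-triangular product, so one may not freely rearrange infinite products. The generating-function formulation sidesteps this precisely because the left factor $U^t$ is lower triangular, keeping all intermediate sums finite and collapsing the whole lemma to the single resummation above followed by a one-line comparison of two rational functions.
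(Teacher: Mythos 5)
Your argument is correct and follows essentially the same route as the paper: both identify $f_{A_\pm}(u,v)=\frac{1}{(r^{-1}+u)(r^{-1}+v)-q^{\pm 2}}$ and then fix the parameters by matching the four coefficients of a bilinear denominator, the only difference being that you resum $U(\al_\pm,\beta_\pm)^t A_\pm$ directly whereas the paper computes $f_{(U^t)^{-1}U'}$ via the convolution product together with $U(\al,\beta)^{-1}=U(-1/\beta,-1/\al)$. One small correction to your closing aside: right-multiplying the reduced identity by $U(\al_\pm,\beta_\pm)$ produces no contradiction, because with these parameters $1+\al_\pm\beta_\pm'=0$ and the upper-triangular product $U(\al_\pm',\beta_\pm')U(\al_\pm,\beta_\pm)$ is in fact diagonal (hence symmetric), so that manipulation is merely unnecessary, not invalid.
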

\begin{proof} The statement of the lemma is an immediate consequence of the fact that the Taylor-expansion expression
\eqref{matelem} around $(u,v)=(r^{-1},r^{-1})$ turns into the following double generating functions for the matrix elements
of $A_\pm$:
$$ f_{A_\pm}(u,v)=\sum_{i,j\in \Z_+} (A_\pm)_{i,j}u^i v^j= \frac{1}{(r^{-1}+u)(r^{-1}+v)-q^{\pm 2}} $$
Moreover, using the generating function for the matrix elements of $U(\al,\beta)$ of Appendix \ref{defapp}:
$$f_{U(\al,\beta)}(u,v)=\frac{1}{1-\beta v(1+\al u)} \, ,$$
and for $\al\beta\neq 0$, $U(\al,\beta)^{-1}=U(-1/\beta,-1/\al)$, we finally compute by convolution product:
\begin{eqnarray*}f_{U^t(\al,\beta)^{-1}U(\al',\beta')}(u,v)&=& f_{U(-1/\beta,-1/\al)}(v,u)*f_{U(\al',\beta')}(u,v) \\
&=&\oint_{\mathcal C} \frac{dt}{2i\pi t}\,  \frac{1}{1+\frac{1}{\al} u(1-\frac{1}{\beta} t^{-1})} \, \frac{1}{1-\beta' v(1+\al' t)}\\
&=&\frac{1}{(1+\frac{1}{\al}u)(1-\beta'v)-u v\frac{\al'\beta'}{\al\beta}}
\end{eqnarray*}
The lemma follows from comparing this with the expressions for $f_{A_\pm}(u,v)$.
\end{proof}

The Lemma is easily extended to finite truncations of the infinite matrices $A_\pm$ and $U$, as $U$ is upper triangular and
$(U^t)^{-1}$ is lower triangular. Therefore we may write 
$A_\pm^{[0,n-1]}=\frac{1}{r^{-2}-q^{\pm 2}} (U_\pm^{[0,n-1]\ t})^{-1}{U_\pm^{[0,n-1]}}'$ 
with the obvious shorthand notations. Going back to our original determinant, we find that
\begin{eqnarray*}&&Z_{6V}^{(n)}(a,b,c)=\frac{(a b)^{n^2}}{c^n} \, \det\left( A_-^{[0,n-1]}-A_+^{[0,n-1]}\right) \\
&=&
\frac{(a b)^{n^2}}{c^n} \det(A_-^{[0,n-1]})
\det\left({\mathbb I}- \frac{r^{-2}-q^{-2}}{r^{-2}-q^{2}} U_-^{[0,n-1]\ t} 
(U_+^{[0,n-1]\ t})^{-1}{U_+^{[0,n-1]}}'\big({U_-^{[0,n-1]}}'\big)^{-1} \right) 
\end{eqnarray*}
where $\mathbb I$ stands for the $n\times n$ identity matrix.
The last product of 4 matrices is finally identified with the matrix $G$ of Theorem \ref{asmthm} and computed by
use of generating functions, whereas the determinant of $A_-^{[0,n-1]}$ follows from its expression as a product
of triangular matrices. Collecting all the factors finally yields (\ref{asmdet}-\ref{fasm}), with the following identification
of parameters:
$$ x=\left(\frac{q^2-q^{-2}}{q^{-1}r-q r^{-1}}\right)^2\, , \quad y=\left(\frac{q r-q^{-1}r^{-1}}{q^{-1}r-q r^{-1}}\right)^2\, , \quad
\nu=\frac{r^{-2}-q^{-2}}{q^2-q^{-2}} \, , \quad 1-\nu=\frac{q^2-r^{-2}}{q^2-q^{-2}} \, .$$

\section{Enumerating DPPs}

\subsection{Lattice path formulation of DPPs}

\begin{figure}
\centering
\includegraphics[width=13.cm]{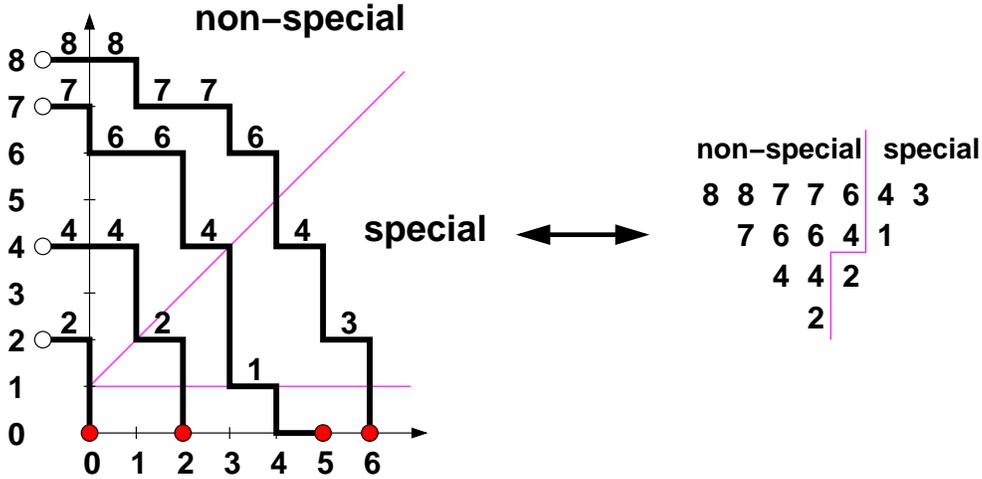}
\caption{\small Non-intersecting lattice path configuration for a sample DPP of order $n\geq 8$.
We have indicated the domains in which horizontal steps correspond to special/non-special parts.}
\label{fig:dppnilp}
\end{figure}

The DPPs are in bijection with configurations of non-intersecting lattice paths illustrated in Fig.\ref{fig:dppnilp}
and defined as follows. Like in Sect. \ref{pathLU}, the paths take place in the positive quadrant
$(\Z_+)^2$, with the same steps but different weights and boundary conditions.
The paths start along the $x$ axis at positions of the form $(s_i,0)$ ($i=1,2,...,r$ recorded from right to left) 
and end along the $y$ axis at positions $(0,s_i+2)$ ($i=1,2,...,r$ recorded from top to bottom). 
We add a final horizontal left step at the end of each path. Reading paths from left to right 
and top to bottom, we record the vertical positions $y=a_{i,j}$ of the $j$-th horizontal step from 
the left taken on the $i$-th path from top (steps with $y=0$ are not recorded). 
These form a DPP with $r$ rows, of order any $n\geq s_1+2$. 
Conversely to each DPP with $r$ rows we may associate such
a path configuration. Note that the starting points are such that $s_i=\lambda_i-1$, where 
$\lambda_i=\mu_i-i+1$ the total number of parts in the row $i$. 

The special parts correspond to horizontal steps 
taken in the strict upper octant $y\geq x+1$ of the plane, and the remaining parts correspond 
to the horizontal steps in the domain $1\leq y\leq x+1$,
while horizontal steps along the x axis do not count (weight $1$).

\subsection{Computation of $Z_{DPP}^{(n)(}(x,y,1)$}\label{compudpp}

The computation of $Z_{DPP}^{(n)}(x,y,1)$ uses the Lindstr\"om-Gessel-Viennot \cite{LGV1,LGV2} determinant formula
expressing the partition function for non-intersecting lattice paths with fixed atarting points and endpoints
$Z$ as a determinant $\det(Z_{i,j})$ where $Z_{i,j}$ is the partition function for a single path from the $i$-th starting point to
the $j$-th endpoint. This leads to the following:

\begin{thm}\cite{BDZJ1}\label{dppthm}
The partition function $Z_{DPP}^{(n)}(x,y)$ for DPPs of order $n$
with weight $x$ per special part and $y$ per other part reads:
\begin{equation}\label{dppdet}
Z_{DPP}^{(n)}(x,y)= \det \left( {\mathbb I} + H^{[0,n-1]}\right)
\end{equation}
where the determinant is that of the finite truncation to the $n$ first rows and columns 
of the infinite matrix $(M_{DPP})_{i,j}=\delta{i,j}+H_{i,j}$, $i,j\in \Z_+$, 
with generating function:
\begin{equation}\label{fdpp} f_{M_{DPP}}(u,v)=\sum_{i,j\in\Z_+} (M_{DPP})_{i,j}u^iv^j=
 \frac{1}{1-u v}+\frac{1}{1-u}\, \frac{y u}{1-x u-v -(y-x) u v}
\end{equation}
\end{thm}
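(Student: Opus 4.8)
The plan is to realize $Z_{DPP}^{(n)}(x,y)$ as a sum of principal minors of a single matrix and then collapse that sum into $\det(\mathbb{I}+H^{[0,n-1]})$. First I would fix the combinatorial data of a DPP: by the bijection of Fig.~\ref{fig:dppnilp}, a DPP with $r$ rows is a family of $r$ mutually non-intersecting lattice paths whose sources lie on the $x$-axis at abscissae $s_1>\cdots>s_r$ and whose sinks lie on the $y$-axis at ordinates $s_1+2>\cdots>s_r+2$, with a horizontal step carrying weight $x$ in the special region, weight $y$ in the non-special region, and weight $1$ along the axis. The order-$n$ condition bounds the admissible positions to a finite range. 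Since the sources and sinks are listed in compatible orders (rightmost source pairs with topmost sink), the only non-intersecting way to join them realizes the identity permutation, so the Lindstr\"om--Gessel--Viennot formula \cite{LGV1,LGV2} gives, for a fixed position set $S=\{s_1,\dots,s_r\}$, the \emph{principal} minor $\det_{a,b\in S}(H_{a,b})$ with no sign ambiguity, where $H_{a,b}$ is the weighted partition function of a single path from the source indexed by $a$ to the sink indexed by $b$.

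The heart of the argument is to compute the single-path matrix $H$ and verify that its double generating function is the second term of \eqref{fdpp}. Here I would reuse the path machinery of Section~\ref{pathLU}: a single path crosses the diagonal $y=x$ that separates the special weight $x$ from the non-special weight $y$, and the bulk transfer through this region is governed precisely by the matrix $G=T(x,1,y-x)$ of Theorem~\ref{asmthm}, whose generating function is $\frac{1}{1-xu-v-(y-x)uv}$. The remaining factor $\frac{yu}{1-u}$ should then come from the boundary data: the weight-$1$ horizontal run along the $x$-axis contributes $\frac{1}{1-u}$, while the added final left step together with the entry into the weighted region contributes $yu$, once an index shift absorbs the offset $s_i\mapsto s_i+2$ between sources and sinks. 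Assembling these by the convolution rules of Appendix~\ref{genapp} should yield exactly $f_H(u,v)=\frac{1}{1-u}\,\frac{yu}{1-xu-v-(y-x)uv}$, and adding the identity term $\frac{1}{1-uv}$ coming from the empty path recovers $f_{M_{DPP}}$ as in \eqref{fdpp}.

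Finally I would sum over all DPPs, i.e.\ over all $r\ge 0$ and all admissible position sets $S$. Because each configuration contributes its principal minor $\det(H_S)$ (the empty DPP contributing the term $1$), the total is $\sum_{S}\det(H_S)=\det(\mathbb{I}+H^{[0,n-1]})$ by the standard principal-minor expansion of a determinant, which is precisely \eqref{dppdet}. The step I expect to be the main obstacle is the single-path generating-function computation of the previous paragraph: one must correctly book-keep the two weight regions, the axis steps, the extra boundary step and the source/sink offset, and in particular pin down the exact truncation range so that the order-$n$ restriction corresponds to the leading $n\times n$ block rather than being off by one. By contrast, verifying the LGV compatibility (that no nontrivial permutation survives) and the minor-summation identity is routine, so essentially all the real content sits in identifying $H$ with this transfer-matrix-plus-boundary expression.
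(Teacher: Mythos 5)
Your proposal is correct and follows essentially the same route as the paper: the lattice-path bijection, the Lindstr\"om--Gessel--Viennot theorem reducing each DPP to a principal minor of the single-path matrix, the identity $\sum_S \det(H_S)=\det(\mathbb{I}+H^{[0,n-1]})$, and the decomposition of a single path into an axis run, a sub-diagonal piece and a super-diagonal piece. The only cosmetic difference is that the paper writes the single-path partition function as the explicit double sum \eqref{Delem} and then matches generating functions, whereas you assemble $f_H(u,v)=\frac{yu}{1-u}\cdot\frac{1}{1-xu-v-(y-x)uv}$ directly by convolution (this is exactly the paper's factorization $H=yS(\mathbb{I}-S)^{-1}T(x,1,y-x)$), so the bookkeeping step you flag as the main obstacle does go through as you anticipate.
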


Again, note the close similarity between the matrix $H$ and the transfer matrix $T$ for 
$1+1D$ Lorentzian triangulations. Our proof of the ASM-DPP conjecture will be based on this similarity.
Note also that in the notations of Appendix \ref{defapp}, we have $H=y S({\mathbb I}-S)^{-1}T(x,1,y-x)$.

Let us now sketch the proof of Theorem \ref{dppthm}. The sought after partition function
is a sum over all configurations of $r$ non-intersecting paths ($0\leq r\leq n-1$) with fixed $r$
starting points $(s_i,0)$, $i=1,...,r$ and endpoints $(0,s_i+2)$, $i=1,...,r$. 
According to the Lindstr\"om-Gessel-Viennot theorem, this is the sum over minors
$|D|_{s_1,...,s_r}^{s_1,...,s_r}$ of the $n\times n$ matrix $D$
whose entries $D_{i,j}$ is the partition function for a single path from $(i,0)$ to $(0,j+2)$.
It also has the simple expression:
$$\sum_{r=0}^{n-1}\, \sum_{0\leq s_1<...<s_r\leq n-1}|D|_{s_1,...,s_r}^{s_1,...,s_r} =\det({\mathbb I}+D)$$

Such a path is split into three pieces: (i) between the $x=0$ axis and the first hit on the $x=1$ axis 
(ii) between the $x=1$ axis and the diagonal line $y=x+1$ 
(iii) between the diagonal line $y=x+1$ and the vertical axis $y=0$. Each piece
receives a specific weight, with total contribution:
\begin{equation}
\label{Delem}
D_{i,j}=\sum_{k=0}^{i} \sum_{\ell=0}^{Min(k,j+1)}{k\choose \ell} x^{k-\ell} {j+1\choose \ell} y^{\ell+1} 
\end{equation}
where we have first summed over (i) paths from $(i,0)$to $(k,1)$ with $i-k$ horizontal steps along the $x=0$ axis
and one final vertical step (ii) paths from $(k,1)$ to $(\ell,\ell+1)$ on the diagonal $y=x+1$, for which $k-\ell$ horizontal
steps must be chosen among a total of $k$, each weighted by $x$ as these correspond to special parts (iiii) paths
from $(\ell,\ell+1)$ to $(0,j+2)$ for which $\ell$ horizontal steps must be chosen among a total $j+1$, each 
weighted by $y$ as they correspond to non-special parts with one extra $y$ factor for the additional final horizontal step.

Note that the matrix elements of $D$ are independent of $n$. We may therefore consider the extension of $D$
to an infinite matrix $\tilde D$ with matrix elements given by $D_{i,j}$ of eqn.\eqref{Delem}, for $i,j\in \Z_+$.
The theorem follows by identifying the infinite matrix $H$ with $\tilde D$, and therefore $H^{[0,n-1]}$ with $D$.

\section{Proof of the ASM-DPP conjecture}

\subsection{Proof of $Z_{ASM}^{(n)}(x,y,1)=Z_{DPP}^{(n)}(x,y,1)$ }

The expressions \eqref{asmdet} and \eqref{dppdet} for respectively the partition functions 
$Z_{ASM}^{(n)}(x,y,1)$ and $Z_{DPP}^{(n)}(x,y,1)$ are determinants of the principal minor of 
size $n$ of some infinite matrix, in other words, these are
the determinants of a finite truncation to the $n$ first rows and columns of infinite matrices. 

There is a very simple relation (independent of $n$)
between the generating functions of the two infinite matrices $M_{ASM}$ and $M_{DPP}$, namely:
\begin{equation}\label{identfin}
(1-\frac{u}{1-\nu})(1-v)f_{M_{ASM}} (u,v)=(1-u)(1-(1-\nu)v) f_{M_{DPP}}(u,v)
\end{equation}
as a direct consequence of \eqref{nudef}.

Let us translate this back into a finite matrix relation upon truncation.
First, for any matrix $A$ with generating function $f_A(u,v)$ the function $f_M(u,v)=(1-a u)(1-b v)f_A(u,v)$ is actually
the generating function of the infinite matrix $M=(\mathbb I- a S)A(\mathbb I -b S^t)$ where $S$ is the strictly lower triangular
shift matrix with elements $S_{i,j}=\delta_{i-j,1}$ for $i,j\in \Z_+$, and $S^t$ its strictly upper triangular transpose.
Upon truncation to indices in $[0,n-1]$, we have the obvious relation (see Lemma \ref{unitri} in Appendix \ref{genapp}): 
$M^{[0,n-1]}=(\mathbb I- a S)^{[0,n-1]} A^{[0,n-1]} (\mathbb I -b S^t)^{[0,n-1]}$, due to lower triangularity of $\mathbb I- a S$
and upper triangularity of $\mathbb I -b S^t$. Note that both matrix truncations are unitriangular, hence have determinant $1$ 
so that $\det(M^{[0,n-1]})=\det(A^{[0,n-1]})$ for all $n\geq 1$.
By the identity \eqref{identfin}, we therefore conclude that the truncations $M_{ASM}^{[0,n-1]}$ and $M_{DPP}^{[0,n-1]}$
have the same determinant, and the $z=1$ version of
Theorem \ref{mrrthm} follows.

\subsection{Refinement: proof of the MRR conjecture}

The observable $t(B)$ for ASMs $B$ may be included by slightly modifying the homogeneous limit
of the IK determinant. We simply have to consider vertex weights with homogeneous limits in
$\{a(r,r^{-1}),b(r,r^{-1}),c(r,r^{-1})\}$ at points $(i,j)$, $i=1,2,...,n$ and $j=1,2,...,n-1$ of the square grid,
and different weights $\{a(s,s^{-1}),b(s,s^{-1}),c(s,s^{-1})\}$ for the last column $i=1,2,...,n$ and $j=n$.
Defining further 
$$ z=\frac{a(s,s^{-1}) b(r,r^{-1})}{b(s,s^{-1}) a(r,r^{-1})} $$
gives an extra contribution $z^{t(B)}$ to the ASM enumeration.

Adapting the method of enumeration described above, one finds that we simply have to
change the definition of the last column of $M_{ASM}^{[0,n-1]}$ to include the $z$ dependence.
This in turn is obtained by modifying {\it all} columns of index $j\geq n-1$ in the infinite matrix
$M_{ASM}$ (we refer the reader to \cite{BDZJ1} for the technical details).  The result is the following:

\begin{thm}
The quantity $(1+\nu(z-1))Z_{ASM}(x,y,z)$ is the determinant of the truncation to the $n$ first rows
and columns of the modified infinite matrix $M_{ASM}'$, with double generating function
\begin{eqnarray*}
f_{M_{ASM}'}(u,v)&=&\frac{1-\nu}{1-u v} +\frac {\nu}{1- x u-v -(y-x)uv} \\
&&+\frac{\nu (z-1)}{1-(y(z-1)+x)u}\left( 1+\frac{y u}{1-x u}\right)^n v^{n-1} \left(
1+\frac{v}{x} \frac{y(\nu-1)+\nu(x u-1)}{\nu+(y-\nu x)u}\right)\end{eqnarray*}
\end{thm}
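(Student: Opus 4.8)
The plan is to follow the strategy already used for the $z=1$ case in Theorem \ref{asmthm}, but starting from a partially inhomogeneous Izergin--Korepin determinant. Concretely, I would keep the integrable weights at the homogeneous point $(r,r^{-1})$ at every vertex of the grid except those in the last column $j=n$, where I assign the spectral parameter $s$; that is, I send $z_i\to r$ for all $i$ and $w_j\to r^{-1}$ for $j=1,\dots,n-1$ while holding $w_n\to s^{-1}$. As recalled just before the statement and established in detail in \cite{BDZJ1}, this single inhomogeneity reweights each ASM by $z^{t(B)}$ with $z=\tfrac{a(s,s^{-1})b(r,r^{-1})}{b(s,s^{-1})a(r,r^{-1})}$, so that the suitably normalized partial homogeneous limit of \eqref{IK} computes $Z_{ASM}^{(n)}(x,y,z)$. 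The goal is then to massage this limit into the determinant of a size-$n$ truncation of an infinite matrix that differs from $M_{ASM}$ only in its columns of index $\ge n-1$.

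The core computation is the partial homogeneous limit of \eqref{IK}. Since all $n$ rows still degenerate to $z_i\to r$, each row $i$ continues to produce the $u$-derivative of order $i$ evaluated at $u=r^{-1}$, exactly as in the proof of Theorem \ref{asmthm}. The columns, however, split into two groups. The $n-1$ columns $j=1,\dots,n-1$ degenerate to $w_j\to r^{-1}$ and hence yield the $v$-derivative coefficients of orders $0,\dots,n-2$, which are literally the first $n-1$ columns of $M_{ASM}^{[0,n-1]}$. The last column $j=n$ stays at $w_n\to s^{-1}$ and is therefore a \emph{direct evaluation} of $\tfrac{c(u^{-1},s^{-1})}{a(u^{-1},s^{-1})b(u^{-1},s^{-1})}$ rather than a Taylor coefficient. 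I would track the prefactors carefully: the Vandermonde $\Delta(w)$ factors as the degenerate Vandermonde of $w_1,\dots,w_{n-1}$ times the finite cross term $\prod_{i=1}^{n-1}(w_i-w_n)\to(r^{-1}-s^{-1})^{n-1}$, and the weight product $\prod_{i,j}a(z_i,w_j)b(z_i,w_j)$ contributes the separate finite factor $(a(r,s^{-1})b(r,s^{-1}))^n$ for the last column, together with the split of the $c^{-n}$ normalization. Using the parameter dictionary $x,y,\nu$ of Theorem \ref{asmthm} together with \eqref{nudef}, these finite factors should account for the scalar prefactor $1+\nu(z-1)$ relating $\det\bigl((M_{ASM}')^{[0,n-1]}\bigr)$ to $Z_{ASM}(x,y,z)$.

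Having isolated the last column, I would identify its generating function using exactly the machinery of Lemma \ref{apdecomp}: write the relevant ratio of weights as a difference of two simple poles, decompose the associated infinite matrices through the upper-triangular factors $U(\alpha,\beta)$, and perform the convolution, but now with the column-end data evaluated at $s$ instead of $r$. The outcome should be that the modified last column has $u$-generating function $\tfrac{\nu(z-1)}{1-(y(z-1)+x)u}\bigl(1+\tfrac{yu}{1-xu}\bigr)^n$ added to the $v^{n-1}$ part of $f_{M_{ASM}}$; here the $n$-th power traces back to the product $(a(r,s^{-1})b(r,s^{-1}))^n$ over the $n$ degenerate rows, and the pole at $u=1/(y(z-1)+x)$ to the evaluation at $s^{-1}$. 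Assembling columns $0,\dots,n-2$ (unchanged) with this new last column reproduces the truncation of the infinite matrix $M_{ASM}'$ whose generating function is displayed in the statement. Note that the $v^n$ part of $f_{M_{ASM}'}$ touches only the would-be column $n$ and is therefore invisible to the size-$n$ truncation; only the $v^{n-1}$ coefficient survives, and the higher part is recorded merely so that $M_{ASM}'$ itself has a closed-form generating function.

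Finally I would conclude as in the proof of Theorem \ref{asmthm}: the first $n-1$ columns of the truncated limit determinant are, by construction, identical to those of $M_{ASM}^{[0,n-1]}$, so only the recomputed last column is new, and assembling them identifies the limit with $\det\bigl((M_{ASM}')^{[0,n-1]}\bigr)$, with the accumulated prefactors supplying the scalar $1+\nu(z-1)$. The same generating-function-to-matrix correspondence used in the $z=1$ argument, whereby multiplication by the unitriangular factors $\mathbb{I}-aS$ and $\mathbb{I}-bS^t$ leaves size-$n$ determinants unchanged, remains available to bring the generating functions to the displayed closed form. I expect the main obstacle to be the partial homogeneous limit itself: handling the single non-degenerate last column consistently with the $n-1$ degenerate ones, correctly apportioning the Vandermonde cross term, the last-column weight product, and the split $c$-normalization, and re-running the $U(\alpha,\beta)$ convolution so that both the $n$-dependent factor $\bigl(1+\tfrac{yu}{1-xu}\bigr)^n$ and the exact scalar $1+\nu(z-1)$ emerge. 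Everything else is bookkeeping of the kind already carried out for the unrefined case.
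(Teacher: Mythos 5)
Your proposal follows essentially the same route as the paper, which itself only sketches this step: a partially inhomogeneous Izergin--Korepin determinant with the last column held at spectral parameter $s$, a partial homogeneous limit producing Taylor-coefficient columns $0,\dots,n-2$ plus one directly evaluated last column, and the $U(\al,\beta)$ convolution machinery of Lemma \ref{apdecomp} to recast the result as the truncated determinant of an $n$-dependent infinite matrix. The only (minor) point of divergence is that the paper attributes the scalar $1+\nu(z-1)$ to a deliberate, ad-hoc modification of the columns of index $\geq n-1$ made to simplify the generating function, rather than to the finite Vandermonde and weight factors of the limit as you suggest, but this does not change the substance of the argument.
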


The prefactor $(1+\nu(z-1))$ is ad-hoc and comes from a modification of the columns $n$ and higher
in the infinite matrix to make it simpler.
Note that the new infinite matrix $M_{ASM}'$ has an explicit dependence on $n$.

Likewise, keeping track of the observable $M(A)$ in a DPP $A$ is easy.
The lattice path formulation still holds and yields a LGV-like determinant as well,
but for a modified $n\times n$ matrix $D_{i,j}'$, identical to $D_{i,j}$ of \eqref{Delem} for
$i=1,2,...,n$ and $j=1,2,...,n-1$ and with a different last column, explicitly depending on $z$.

The latter dependence is the result of
decomposing further the piece (iii) of the DPP (see Sect.\ref{compudpp}), when $j=n$,
into $(iii-a)$ and $(iii-b)$ and attaching weights as
follows:
(iii-a) the piece of the path from $(\ell,\ell+1)$ to its first vertex on the $y=n$ line at $(m,n)$
with a total of ${n-m-1\choose \ell-m}$ paths all with weight $y^{\ell-m}$
and (iii-b) the straight path from $(m,n)$ to $(0,n)$ with an extra weight of $(y z)^{m+1}$.
This gives:
\begin{equation}
D_{i,j}'=\sum_{k=0}^{i} \sum_{\ell=0}^{k}{k\choose \ell} x^{k-\ell}  {n-m-1\choose \ell-m} y^{\ell+1} z^{m+1}
\end{equation}
This leads to the following:

\begin{thm}
The quantity $(1+\nu(z-1))Z_{DPP}(x,y,z)$ is the determinant of the truncation to the $n$ first rows
and columns of the modified infinite matrix $M_{ASM}'$, with double generating function:
\begin{eqnarray*}
f_{M_{DPP}'}(u,v)&=& \frac{1}{1-z w}+\frac{1}{1-z}\frac{y z}{1-x z-w -(y-x) z w} \\
&&+(z-1) \frac{1-v}{1-u}\frac{yu+\nu(1-x u)}{1-(y(z-1)+x)u} \left( 1+\frac{y u}{1-x u}\right)^n v^{n-1}
\end{eqnarray*}
\end{thm}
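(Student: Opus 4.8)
The plan is to re-run the Lindstr\"om--Gessel--Viennot computation that proved Theorem~\ref{dppthm}, now keeping track of the extra statistic $M(A)$, the number of parts of $A$ equal to the order $n$. Under the non-intersecting lattice path bijection a part equal to $n$ is precisely a horizontal step taken on the top line $y=n$, so I would incorporate the weight $z^{M(A)}$ by attaching a factor $z$ to each horizontal step at height $y=n$. Since LGV presents the refined partition function as $\det(\mathbb I+(D')^{[0,n-1]})$, with $(D')_{i,j}$ the weighted single-path partition function from $(i,0)$ to the $j$-th endpoint on the vertical axis, the entire effect of the refinement is localized: only those single paths that actually reach the line $y=n$ see the new weight, and in the present normalization these are exactly the paths ending in the last column. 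Hence $(D')_{i,j}=D_{i,j}$ of \eqref{Delem} for every earlier column, and only the last column carries genuine $z$-dependence.

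Next I would recompute that last column by refining the path decomposition of Section~\ref{compudpp}. The third piece (iii), running from the diagonal $y=x+1$ to the vertical axis, is cut at its first visit to the line $y=n$, say at $(m,n)$: the lower portion (iii-a) from $(\ell,\ell+1)$ to $(m,n)$ carries only the usual weight $y$ per horizontal step (non-special parts strictly below the top line), while the straight upper portion (iii-b) from $(m,n)$ to the axis carries $(yz)^{m+1}$, recording its $m+1$ horizontal steps on $y=n$ as parts equal to $n$. Summing over the internal indices $k,\ell,m$ as in \eqref{Delem} produces the refined last-column entry, with $\binom{n-m-1}{\ell-m}$ replacing $\binom{j+1}{\ell}$. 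As a consistency check one verifies, via the hockey-stick identity $\sum_m\binom{n-m-1}{\ell-m}=\binom{n}{\ell}$, that setting $z=1$ collapses this back to the unrefined last column, so that $f_{M_{DPP}'}$ reduces to $f_{M_{DPP}}$ of \eqref{fdpp}.

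The substantial part of the work is the passage to the double generating function. I would recognise the $k$- and $\ell$-sums as the very convolution that produced the first two terms of $f_{M_{DPP}}$, and handle the new $m$-sum together with the factor $v^{n-1}$ coming from the fixed last column separately. This is where the explicit dependence on the order $n$ appears, through $\left(1+\frac{yu}{1-xu}\right)^n$ and $v^{n-1}$, and this is the step I expect to be the main obstacle: one must resum the top-line steps while keeping the $n$-dependence exact, and then reorganise the result as $f_{M_{DPP}}(u,v)$ plus a correction proportional to $(z-1)$, matching the stated form.

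Finally I would introduce the parameter $\nu$ and the overall factor $1+\nu(z-1)$. As in the preceding theorem for $M_{ASM}'$, these are not forced by the enumeration: $\nu$ enters through a transparent modification of the last column of the matrix (and of the invisible columns of index $\ge n$ of the infinite matrix, which lie outside the truncation), while the scalar $1+\nu(z-1)$ is absorbed into the left-hand side as a bookkeeping normalization, so that the $(z-1)$-correction acquires the factor $yu+\nu(1-xu)$ and $f_{M_{DPP}'}$ takes exactly the same shape as $f_{M_{ASM}'}$. The whole point of this cosmetic step is to set up the $z$-deformed analogue of the relation \eqref{identfin} between the two generating functions, from which $Z_{ASM}(x,y,z)=Z_{DPP}(x,y,z)$ will follow by the unitriangular-truncation argument of Section~5. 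Verifying that the chosen modification reproduces exactly the stated closed form, with $\nu$ determined by \eqref{nudef} and the correct prefactor, is the last and most error-prone check.
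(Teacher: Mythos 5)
Your proposal follows essentially the same route as the paper: the LGV determinant with only the last column modified, the splitting of piece (iii) at the first vertex $(m,n)$ on the line $y=n$ with weights ${n-m-1\choose \ell-m}y^{\ell-m}$ and $(yz)^{m+1}$, and the ad-hoc introduction of $\nu$ and the prefactor $1+\nu(z-1)$ via a modification of the columns of index $\geq n$ of the infinite matrix. The hockey-stick consistency check at $z=1$ is a nice addition, but otherwise the argument, including the deferral of the generating-function resummation to a direct computation, is the one the paper gives.
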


Like $M_{ASM}'$, the infinite matrix $M_{DPP}'$ has an explicit dependence on $n$.

The proof of the complete Mills-Robbins-Rumsey conjecture follows from the following elementary lemma,
easily proved by direct computation:
\begin{lemma}
We have the relation:
$$ (1+(y-x \nu -1)u)(1-v)f_{M_{ASM}'}(u,v)=(1-u)(1+(\nu-1)v)f_{M_{DPP}'}(u,v)$$
\end{lemma}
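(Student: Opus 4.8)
The plan is to treat the asserted identity as a purely algebraic relation between two rational functions of $u$ and $v$ (with parameters $x,y,z,\nu,n$), to be verified modulo the defining quadratic \eqref{nudef} for $\nu$. First I would split each generating function into its \emph{principal part}---the first two summands, which are exactly the unrefined functions $f_{M_{ASM}}$ of \eqref{fasm} and $f_{M_{DPP}}$ of \eqref{fdpp}---and its \emph{correction part}, namely the last summand carrying the prefactor $(z-1)$, the monomial $v^{n-1}$, and the power $\big(1+\tfrac{yu}{1-xu}\big)^n$.

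The key preliminary observation is that the two prefactors occurring in the Lemma coincide, modulo \eqref{nudef}, with those of \eqref{identfin}. Indeed, one checks $(1+(y-x\nu-1)u)(1-\nu)=1-\nu-u$ whenever $x\nu(1-\nu)=\nu+y(1-\nu)$, so that $1+(y-x\nu-1)u=1-\tfrac{u}{1-\nu}$; and $1+(\nu-1)v=1-(1-\nu)v$ holds identically. Consequently, when applied to the principal parts alone, the asserted relation is precisely \eqref{identfin}, which is already established. The principal contributions to the two sides therefore agree and may be discarded, leaving only the correction parts to be matched.

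For those I would factor out the pieces common to both correction terms---the scalar $(z-1)$, the denominator $1-(y(z-1)+x)u$, the power $\big(1+\tfrac{yu}{1-xu}\big)^n$, and $v^{n-1}$---none of which depends on the ASM/DPP side. After this cancellation, and after noting that a factor $(1-v)$ survives on both sides (explicitly on the left, and via $(1-u)\cdot\tfrac{1-v}{1-u}$ on the right), the identity collapses to a relation that is at most linear in $v$:
$$\nu\,(1+(y-x\nu-1)u)\Big(1+\tfrac{v}{x}\,\tfrac{y(\nu-1)+\nu(xu-1)}{\nu+(y-\nu x)u}\Big)=(1+(\nu-1)v)(yu+\nu(1-xu)).$$
Equating the $v^{0}$ coefficients reproduces \eqref{nudef} directly, while equating the $v^{1}$ coefficients---after clearing the denominator $\nu+(y-\nu x)u$---yields a quadratic polynomial identity in $u$ that is again a consequence of \eqref{nudef}.

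The whole argument is elementary; the step I would expect to be the main obstacle is not any single calculation but the bookkeeping that shows the $n$-dependent data $\big(1+\tfrac{yu}{1-xu}\big)^n v^{n-1}$, together with the factor $1/(1-(y(z-1)+x)u)$, appear \emph{identically} in both correction terms, so that they factor out cleanly and leave only the low-degree rational identity displayed above. Once this structural matching is recognized, the verification via \eqref{nudef} is routine.
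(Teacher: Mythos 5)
Your proof is correct and is essentially the paper's approach: the paper merely asserts the lemma is ``easily proved by direct computation,'' and your organization of that computation --- splitting off the principal parts, which satisfy the relation by \eqref{identfin} since $(1+(y-x\nu-1)u)(1-\nu)=1-\nu-u$ modulo \eqref{nudef}, and then cancelling the common factors $(z-1)$, $\big(1+\tfrac{yu}{1-xu}\big)^n v^{n-1}$, $\big(1-(y(z-1)+x)u\big)^{-1}$ and $(1-v)$ from the correction parts --- is a valid and efficient way to carry it out. The residual identity you display does check out coefficient by coefficient in $v$ (both the $v^0$ and $v^1$ relations follow from $(y-x\nu)(1-\nu)=-\nu$, an equivalent form of \eqref{nudef}), so nothing is missing.
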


As explained above, such a relation between the two infinite matrices $M_{ASM}'$ and $M_{DPP}'$
guarantees that the determinant of their truncation to their first $m$ rows and columns
coincide, for any $m\geq 1$, so it holds in particular for $m=n$ and Theorem \ref{mrrthm} follows.

\subsection{More refinements}

In \cite{BDZJ2} a further observable was considered for ASMs and DPPs. For any ASM $B$ let $b(B)$ 
be the number of $0$ entries to the right of the unique $1$ in the bottom row of $B$.
For any DPP $A$ of order $n$, let $P(A)$ be the number of parts equal to $n-1$ plus the number of rows 
of length $n-1$. Defining the two following partition functions:
\begin{eqnarray*}Z_{ASM}^{(n)}(x,y,z,w)&=&\sum_{n\times n\, ASM\, B} x^{N(B)}y^{Inv(B)-N(B)}z^{t(B)}w^{b(B)}\\
Z_{DPP}^{(n)}(x,y,z,w)&=&\sum_{DPP\, A\, {\rm of} \, {\rm order}\, n} x^{S(A)}y^{NS(A)}z^{M(A)}w^{P(A)}
\end{eqnarray*}
for respectively ASMs of size $n$ and DPPs of order $n$, we have:
\begin{thm}\cite{BDZJ2}
We have the identity:
$$ Z_{ASM}^{(n)}(x,y,z,w)=Z_{DPP}^{(n)}(x,y,z,w) $$
\end{thm}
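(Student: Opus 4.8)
The plan is to follow verbatim the strategy that established the $z$-refined identity in the previous subsection, now carrying along the fourth variable $w$. Concretely, I would express each of $Z_{ASM}^{(n)}(x,y,z,w)$ and $Z_{DPP}^{(n)}(x,y,z,w)$, up to a common scalar prefactor, as the determinant of the truncation to the first $n$ rows and columns of a single infinite matrix --- call them $M_{ASM}''$ and $M_{DPP}''$ --- compute the double generating functions $f_{M_{ASM}''}(u,v)$ and $f_{M_{DPP}''}(u,v)$ in closed form, and then exhibit a relation of the product-of-linear-factors type
$$ P(u)\,Q(v)\, f_{M_{ASM}''}(u,v) = R(u)\, S(v)\, f_{M_{DPP}''}(u,v), $$
with $P,Q,R,S$ all degree-one polynomials whose coefficients are rational in $x,y,z,w,\nu$. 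As in the $z$-refined case, each factor $(1-\alpha u)$ corresponds to left multiplication by the lower-unitriangular matrix $\mathbb{I}-\alpha S$ and each $(1-\beta v)$ to right multiplication by the upper-unitriangular $\mathbb{I}-\beta S^t$; by Lemma \ref{unitri} these operations commute with truncation and have determinant $1$, so the displayed identity forces $(M_{ASM}'')^{[0,n-1]}$ and $(M_{DPP}'')^{[0,n-1]}$ to have equal determinant for every $n$, which is exactly the claimed equality of partition functions. The common prefactor, now depending on both $z$ and $w$, must be shown to coincide on the two sides so that it cancels, just as $(1+\nu(z-1))$ cancelled in the $z$-refined theorems.

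The ASM matrix is obtained from the Izergin-Korepin determinant \eqref{IK} in a doubly inhomogeneous limit. The observable $t(B)$ was incorporated by assigning the last column of the grid a distinct homogeneous spectral parameter, producing the $z$-dependent correction of $M_{ASM}'$; the symmetric observable $b(B)$, which records the position of the $1$ in the bottom row, should be incorporated the same way by assigning the last row of the grid its own spectral parameter, producing a $w$-dependent correction. I would therefore Taylor-expand the IK integrand around the two distinguished points, track $N(B)$, ${\rm Inv}(B)$, $t(B)$ and $b(B)$ simultaneously through the $6V$ dictionary, and read off $M_{ASM}''$. Since $u$ tracks the row index and $v$ the column index of $M_{ASM}$, I expect the closed form to be the expression of the $z$-refined theorem supplemented by a second correction term carrying a factor $u^{n-1}$, the row-analogue of the column factor $v^{n-1}$, so that $M_{ASM}''$ depends on $n$ through both a modified last row and a modified last column.

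On the DPP side I would extend the lattice-path decomposition of Section \ref{compudpp}. The statistic $P(A)$ counts parts equal to $n-1$ together with rows of length $n-1$; the first is a statistic of horizontal steps on the line $y=n-1$, while the second is a statistic of the starting points $s_i=n-2$ of the paths. I would accordingly refine piece (iii) once more, splitting off the portion that reaches the $y=n-1$ line and reweighting the initial data corresponding to maximal-length rows, exactly as the $z$-refinement split (iii) into (iii-a) and (iii-b) at the $y=n$ line in \eqref{Delem}. This yields a modified Lindström-Gessel-Viennot matrix, hence an infinite matrix $M_{DPP}''$ with an explicit $n$-dependent last row and last column, whose generating function I would resum into closed form using the geometric-series identities already used for \eqref{fdpp}.

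The main obstacle will be the final step: verifying the bilinear relation between the two fully refined generating functions. With both $z$- and $w$-corrections present, $f_{M_{ASM}''}$ and $f_{M_{DPP}''}$ each acquire two inhomogeneous, rank-one-type correction terms acting respectively in the row and in the column direction, and these must conspire so that the discrepancy between the two matrices is absorbed into degree-one multipliers in $u$ and in $v$ only --- any higher-degree factor would correspond to a non-unitriangular operator and would destroy the determinant invariance of Lemma \ref{unitri}. Checking this is a direct but delicate computation that, as in \eqref{identfin} and the $z$-refined lemma, should reduce to repeated use of the defining relation \eqref{nudef} for $\nu$ together with the matching of the scalar prefactors. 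I expect the clean product-of-linear-factors form to survive precisely because the top and bottom ($z$ and $w$) refinements act on disjoint boundaries of the grid and hence decouple at the level of the multipliers.
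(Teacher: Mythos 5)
Your overall programme is not the one the paper follows, and its crucial step is left unverified. The paper (following \cite{BDZJ2}) does \emph{not} attempt a direct product-of-linear-factors relation between fully refined generating functions. Instead it shows that \emph{both} $Z_{ASM}^{(n)}(x,y,z,w)$ and $Z_{DPP}^{(n)}(x,y,z,w)$, regarded as functions of $z$, $w$ and $n$, satisfy the same quadratic recursion
$$ (z-w)Z^{(n)}(z,w)Z^{(n-1)}(1,1)=(z-1)w\, Z^{(n)}(z,1)Z^{(n-1)}(1,w)-(w-1)z\,Z^{(n-1)}(z,1)Z^{(n)}(1,w), $$
obtained by applying the Desnanot--Jacobi identity \eqref{desnajac} to the respective determinant formulations (with the $z$-refinement sitting in the last column and the $w$-refinement in the last row, so that the four minors appearing in \eqref{desnajac} are precisely the specializations $Z^{(n)}(z,1)$, $Z^{(n)}(1,w)$, etc.). Since the singly refined quantities on the right-hand side are already known to agree by the three-variable Theorem \ref{mrrthm}, and $Z^{(n-1)}(1,1)\neq 0$, one solves for $Z^{(n)}(z,w)$ and concludes by induction on $n$. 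The whole point of this route is that it \emph{avoids} having to compare the two $n$-dependent infinite matrices directly once both a row and a column have been modified.

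The gap in your proposal is exactly the step you defer to the end. With corrections in both the last column (terms carrying $v^{n-1}$) and the last row (terms carrying $u^{n-1}$), plus an unavoidable cross term supported on the corner entry (carrying $u^{n-1}v^{n-1}$), the discrepancy between $M_{ASM}''$ and $M_{DPP}''$ must be absorbed simultaneously in the bulk, in the row correction, in the column correction, and in the corner, using only the four scalar coefficients of the degree-one multipliers $P,Q,R,S$. Nothing in the $z$-refined computation suggests these four matching conditions can all be met, and your stated reason for optimism --- that the two refinements ``act on disjoint boundaries and hence decouple'' --- is precisely what the corner term violates. You would need to actually exhibit the relation, or explain how the corner obstruction cancels; absent that, the argument does not close. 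If the direct factorization fails (as the authors' choice of the Desnanot--Jacobi route suggests), the correct fallback is the recursion above, which only requires Lemma \ref{unitri} and the generating-function machinery at the level of the already-proven three-variable identity.
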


This was proved in \cite{BDZJ2} by showing that both functions
$Z_{ASM}^{(n)}(x,y,z,w)$ and $Z_{DPP}^{(n)}(x,y,z,w)$ obey the following relation as functions of $z,w,n$:
$$ (z-w)Z^{(n)}(z,w)Z^{(n-1)}(1,1)=(z-1)w Z^{(n)}(z,1)Z^{(n-1)}(1,w)-(w-1)zZ^{(n-1)}(z,1)Z^{(n)}(1,w) $$
in both cases as a consequence of the Desnanot-Jacobi identity \eqref{desnajac}.

\section{Conclusion}

\subsection{ASM,DPP,TSSCPP, FPL,DPL, etc.}

In these notes, we have detailed the refined enumeration of ASMs and DPPs and established an
identity between them. 

\begin{figure}
\centering
\includegraphics[width=15.cm]{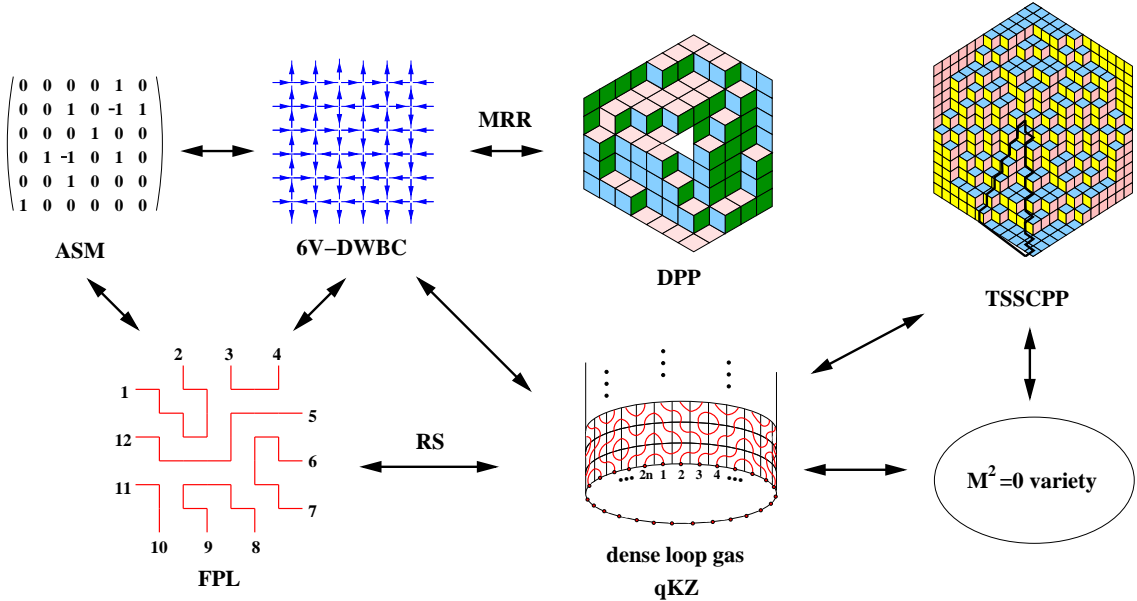}
\caption{\small From left to right: ASM, 6V-DWBC and FPL, all in bijection;
dense loop gas (DPL): its groundstate/limiting probability vector satisfies the qKZ equation, the components measure 
FPL correlations (RS conjecture), their sum matches the 6V-DWBC partition function with inhomogeneous 
spectral parameters $z_i,w_j$ and $q^3=1$; DPPs: their refined evaluation matches that of ASMs (MRR conjecture);
TSSCPPs: their refined enumeration matches a sum rule for qKZ solutions at generic $q$ and $z_i=1$; Variety $M^2=0$: 
its degree/multidegree matches solutions of qKZ for $q=1$.}
\label{fig:everything}
\end{figure}

One could think of further refinements, leading eventually to a bijection between
these objects. This is however only the tip of a much larger iceberg (see Fig.\ref{fig:everything}), 
which on the pure combinatorics
side involves other objects: the so-called TSSCPPs (Totally Symmetric Self-Complementary Plane Partitions)
which are yet another kind of plane partitions, with a formulation in terms of different configurations
of non-intersecting lattice paths (see for instance \cite{Bressoud} for a detailed account). 
There is also a statistical physics side, involving the so-called Fully Packed Loop
(FPL) model on a square grid, whose configurations are in bijection with those of the 6V-DWBC model.
The latter plays a central role in the so-called Razumov-Stroganov (RS) conjecture \cite{RS} proved
by Cantini-Sportiello \cite{CS}, 
relating its refined enumeration
according to link patterns of connections of the loops around the grid
to the asymptotic probabilities of connections of the 
Densely Packed Loop (DPL) model on a semi-infinite cylinder of finite perimeter. The latter is yet another
integrable lattice model based on some pictorial representation of the Temperley-Lieb algebra, whose groundstate
vector is a solution to the  quantum Knizhnik-Zamolodchikov (qKZ) equation \cite{DZJ0,DZJ1}.
Finally, there is an algebraic geometry side of the iceberg. For instance, the degree of the variety of upper triangular complex matrices with vanishing square corresponds to a refined enumeration of TSSCPPs, and the (equivariant cohomology)
multidegree is obtained via a specialization of the solution to the qKZ equation \cite{DZJ2}.

Many of the known enumerations of the above objects involve determinants,. In a number of cases,
these can be obtained through some application of the Lindstr\"om-Gessel-Viennot theorem.
This applies to all the ``free fermion" cases that are
in bijection with non-intersecting lattice path configurations. However, both the 6V model and the DPL model
are models of interacting fermions, in which even if there is some kind of lattice path formulation, the latter are
no longer just non-intersecting. For instance, in the case of the 6V model, one can define paths going
from the left border of the grid to the top border, by going right and up along the oriented edges
as much as possible (i.e. when there is a choice, always go up). Such paths are now ``osculating" in that
two paths can bounce against each other at a vertex (the first going right, then up; the second going up then right;
this corresponds to the vertex $a_1$ of the 6V model),
and this configuration receives a different weight, interpreted as the exponential of some interaction energy.
Yet, somehow, our formula for the refined enumeration has magically disentangled this interaction, to
make it look like a free fermion model, via our determinant evaluation. This
mechanism deserves to be better understood.

\subsection{Integrabilities}

The 6V model is the archetype of 2D integrable lattice model, related to the 1D quantum XXZ spin chain
for $sl_2$. It is know to have an infinite family of transfer matrices $T(a,b,c)$ provided
the Boltzmann weights $a,b,c$ satisfy the following relation:
$$\Delta(a,b,c)=\frac{a^2+b^2-c^2}{2 a b}={\rm const.}$$
This constant is the anisotropy of the associated quantum spin chain.
Alternatively, in terms of the $x,y$ variables of \eqref{xyab}, this turns into the following ``6V" variety:
\begin{equation}\label{xysixv}\psi(x,y)=\frac{1+y-x}{\sqrt{y}} ={\rm const.} 
\end{equation}
as $x,y>0$.

On the other hand, the infinite matrix $M_{ASM}$, whose finitely truncated determinant gives the DWBC
homogeneous 6V partition function on a grid of same size, also involves a transfer matrix $\theta$ of
a form analogous to that of 1+1D Lorentzian triangulations, generated by:
$$f_\theta(u,v)=\frac{1}{1-x u-v -(y-x) u v}$$
The transfer matrices $\theta$ commute for different values of the parameters $x,y$ provided
they belong to the following ``Lorentzian" variety:
\begin{equation}\label{xylor}\varphi(x,y)=\frac{1+x-y}{\sqrt{x}}={\rm const.} 
\end{equation}
obtained by rephrasing \eqref{lorg} above.

Comparing \eqref{xysixv} and \eqref{xylor}, we see that $\varphi(x,y)=\psi(y,x)$, hence the 
two varieties are distinct! However, they do intersect. Solving for
$\varphi(x,y)=q+q^{-1}$ and $\psi(x,y)=p+p^{-1}$ with say $q,p>1$, we find that
$$\sqrt{ x}=\frac{p(q^2-1)}{p^2q^2-1} \qquad \sqrt{y}=\frac{q(p^2-1)}{p^2q^2-1} $$
Conversely, any such point for $p,q>1$ lies at the intersection of two ``integrable" varieties
of the form \eqref{xysixv} and \eqref{xylor}.
This intriguing fact deserves a better understanding. In particular, the 6V variety 
involves commutation of {\it finite} size transfer matrices, whereas the Lorentzian one
concern matrices of infinite size.

\begin{appendix}
\section{Infinite matrices and truncated determinants}\label{genapp}
Throughout these notes, we make extensive use of generating functions for infinite
matrices. Let us summarize here the main definitions and properties we use.

\subsection{Infinite matrices}

We consider infinite matrices $A=(a_{i,j})_{i,j\in \Z_+}$.
The very concept of an infinite matrix is a bit delicate to work with, for instance
the product of two such matrices might not be well defined. This may be repaired
by introducing a formal expansion parameter $\epsilon$, and associating to $A$
the matrix $A(\epsilon)=(\epsilon^{i+j} a_{i,j})_{i,j\in \Z_+}$. The product of any two such
matrices now makes sense in the sense of formal power series of $\epsilon$.
Moreover, even the notion of eigenvector and eigenvalue make sense in this setting,
provided one can show that the latter have formal power or Laurent series expansions
in $\epsilon$.

Such a construction will always be implicit (when not explicit) throughout these notes.
For instance, the parameter $g$ in $T(g,a)$ of (\ref{Tga}-\ref{simple}) plays the role of $\epsilon$.
The ``diagonalization" of $T(g,a)$ along the integrable variety $\varphi(g,a)=q+q^{-1}$ 
(see Sect.\ref{diagoT}) is an example
of such extended notions of eigenvectors and eigenvalues.

\subsection{Generating functions for infinite matrices}\label{defapp}

For an infinite matrix $A=(a_{i,j})_{i,j\in\Z_+}$ and a vector $w=(w_i)_{i\in\Z_+}$ we define the
formal generating functions
$$ f_A(u,v)=\sum_{i,j\in\Z_+} a_{i,j}u^iv^j \qquad f_w(u)=\sum_{i\in \Z_+} w_i u^i  $$
with the following properties for matrices $A,B$ and a vector $w$:
\begin{eqnarray*}
f_{\mathbb I}(u,v)&=&\frac{1}{1-u v}\\
f_{A^t}(u,v)&=&f_A(v,u)\\
f_{Aw}(u)&=&\oint\frac{d t}{2i\pi t} f_A(u,t^{-1})f_w(t) \\
f_{AB}(uv)&=&(f_A*f_B)(u,v)=\oint\frac{d t}{2i\pi t}f_A(u,t^{-1})f_B(t,v)\\
\end{eqnarray*}
where the contour integral picks the constant term in $t$.

We consider the lower and upper triangular matrices $L(\al,\beta)$ and $U(\al,\beta)$ with generating
functions
\begin{equation}\label{defLU} 
f_{L(\al,\beta)}(u,v)=\frac{1}{1-\beta u(1+\al v)} \qquad f_{U(\al,\beta)}(u,v)=\frac{1}{1-\beta v(1+\al u)}
\end{equation}
with $U(\al,\beta)=L(\al,\beta)^t$.
Let us also introduce the shift matrix $S=(\delta_{i,j+1})_{i,j\in\Z_+}$ and the transfer matrix 
$T(\al,\beta,\gamma)$ generated respectively by:
\begin{equation}\label{defST} f_S(u,v)=\frac{u}{1-u v}\qquad  
f_{T(\al,\beta,\gamma)}(u,v)=\frac{1}{1-\al u -\beta v -\gamma u v}\end{equation}
In particular, we have
$$ L(\al,\beta)=T(\beta,0,\al\beta)\qquad U(\al,\beta)=T(0,\beta,\al\beta)\qquad {\mathbb I}=T(0,0,1)$$
The case of the infinite transfer matrix $T(g,a)$ for Lorentzian triangulations corresponds to the identification: 
$$ T(g,a)= T(ga,ga,g^2(1-a^2)$$

We have the following properties easily derived by contour integrals for the corresponding generating functions:
\begin{eqnarray}
L(\al,\beta)L(\al',\beta')&=&L\left(\frac{\al\beta'\al'}{1+\al\beta'},\beta(1+\al\beta')\right)\label{propone}\\
U(\al,\beta)U(\al',\beta')&=&U\left(\frac{\al\beta\al'}{1+\al'\beta},\beta(1+\al'\beta)\right)\label{proptwo}\\
L(\al,\beta)^{-1}=L\left(-\frac{1}{\beta},-\frac{1}{\al}\right)\ \ && \ \  U(\al,\beta)^{-1}
=U\left(-\frac{1}{\beta},-\frac{1}{\al}\right)\label{propthree}\\
L(\al,\beta)U(\al',\beta')&=&T\left(\beta,\beta',\beta\beta'(\al\al'-1)\right)\label{propfour}\\
U(\al',\beta')L(\al,\beta)&=&\frac{1}{1-\beta\beta'}\,
T\left(\frac{\al'\beta\beta'}{1-\beta\beta'},\frac{\al\beta\beta'}{1-\beta\beta'},
\frac{\al\al'\beta\beta'}{1-\beta\beta'}\right)\label{propfive}\\
T(\al,\beta,\gamma)T(\al',\beta',\gamma')&=&\frac{1}{1-\beta\al'} \,
T\left(\frac{\al+\gamma\al'}{1-\beta\al'},\frac{\beta'+\gamma'\beta}{1-\beta\al'},
\frac{\gamma\gamma'-\al\beta'}{1-\beta\al'} \right)\label{propsix}\\
T(\al,\beta,\gamma)^{-1}&=&\frac{\gamma}{\al\beta+\gamma}\, 
T\left(-\frac{\al}{\gamma},-\frac{\beta}{\gamma},\frac{1}{\gamma}\right)\label{propseven}
\end{eqnarray}
These hold whenever the denominators are non-vanishing.

\subsection{Commuting families and addition formulas}\label{comapp}

Using the formulas above, it is easy to derive the following:

\begin{thm}
The following family $\{T_{s,t}(\al)\}_{a\in \C}$ of infinite matrices commute among 
themselves for any fixed values of $s$ and $t$:
$$T_{s,t}(\al)=T(\al,s \al,1-t\al)$$
\end{thm}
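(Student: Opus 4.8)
The plan is to prove commutativity by a single direct computation using the transfer-matrix multiplication formula \eqref{propsix}, the point being that when one substitutes the special parameters $(\al, s\al, 1-t\al)$ the output stays inside the family and, more importantly, depends \emph{symmetrically} on the two spectral parameters $\al$ and $\al'$.

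First I would apply \eqref{propsix} to the product $T_{s,t}(\al)T_{s,t}(\al')=T(\al,s\al,1-t\al)\,T(\al',s\al',1-t\al')$, so that $(\al,\beta,\gamma)=(\al,s\al,1-t\al)$ and $(\al',\beta',\gamma')=(\al',s\al',1-t\al')$. The relevant combination appearing in the denominators is $\beta\al'=s\al\al'$, giving the overall prefactor $1/(1-s\al\al')$, which is already manifestly invariant under $\al\leftrightarrow\al'$. Reading off the three output parameters from \eqref{propsix} gives
\begin{align*}
\frac{\al+\gamma\al'}{1-\beta\al'}&=\frac{\al+\al'-t\al\al'}{1-s\al\al'}, &
\frac{\beta'+\gamma'\beta}{1-\beta\al'}&=\frac{s(\al+\al'-t\al\al')}{1-s\al\al'}, \\
\frac{\gamma\gamma'-\al\beta'}{1-\beta\al'}&=\frac{(1-t\al)(1-t\al')-s\al\al'}{1-s\al\al'}.
\end{align*}

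The first observation is that the second output parameter is exactly $s$ times the first, so on setting $\al''=(\al+\al'-t\al\al')/(1-s\al\al')$ the product has the shape $T(\al'',s\al'',\,\cdot\,)$ and thus remains in the family $\{T_{s,t}\}$. The remaining check is that the third output parameter equals $1-t\al''$: expanding $1-t\al''=1-t(\al+\al'-t\al\al')/(1-s\al\al')$ over the common denominator $1-s\al\al'$ yields $1-t\al-t\al'+t^2\al\al'-s\al\al'$ in the numerator, which is precisely $(1-t\al)(1-t\al')-s\al\al'$. Hence $T_{s,t}(\al)T_{s,t}(\al')=\frac{1}{1-s\al\al'}\,T_{s,t}(\al'')$.

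Commutativity is then immediate: both the prefactor $1/(1-s\al\al')$ and the resulting spectral parameter $\al''=(\al+\al'-t\al\al')/(1-s\al\al')$ are symmetric under the exchange $\al\leftrightarrow\al'$, so the right-hand side is unchanged when the two factors are swapped, giving $T_{s,t}(\al)T_{s,t}(\al')=T_{s,t}(\al')T_{s,t}(\al)$ for all $\al,\al'$. The only mildly laborious step is the verification that the third parameter collapses to $1-t\al''$; everything else (closure in the family and the symmetry that forces commutation) is transparent from the form of \eqref{propsix}, so I do not anticipate any real obstacle.
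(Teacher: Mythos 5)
Your proof is correct and follows essentially the same route as the paper, which derives the theorem from the product formula \eqref{propsix} and records the resulting addition formula $T_{s,t}(\al)T_{s,t}(\al')=\frac{1}{1-s\al\al'}T_{s,t}\bigl(\frac{\al+\al'-t\al\al'}{1-s\al\al'}\bigr)$ immediately after the statement. Your verification that the second output parameter is $s$ times the first and that the third collapses to $1-t\al''$, followed by the symmetry of the prefactor and of $\al''$ under $\al\leftrightarrow\al'$, is exactly the intended argument.
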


We also have the following ``addition" formula:
$$ T_{s,t}(\al)T_{s,t}(\al')= \frac{1}{1-s \al\al'}\, T_{s,t}\left(\frac{\al+\al'-t \al \al'}{1-s \al\al'}\right) $$

For $s=0$, we obtain a family of commuting lower triangular matrices 
$$L_t(\al)=T_{0,t}(\al)=L\left(\frac{1}{\al}-t,\al\right)$$
with the ``addition" formula:
$$L_t(\al)L_t(\al')=L_t(\al+\al'-t \al\al')$$
Changing variables from $\al$ to $a$, with $\al=\frac{1-e^{-ta}}{t}$, and writing $\ell_t(a)=L_t(\al)$
we finally get the addition formula:
\begin{equation}\label{pseudoexp} \ell_t(a)\ell_t(a')=\ell_t(a+a') \end{equation}
from which we deduce that $\ell_t(a)$ is an infinite matrix exponential. More precisely,
let $M_t$ be the infinite matrix generated by
$$ f_{M_t}=\frac{(t v-1)u}{(1-u v)^2}  \qquad M_t=\begin{pmatrix}  
0 & 0 & 0 & \cdots \\
-1 & t & 0 & \cdots \\
0 & -2 & 2 t & \cdots \\
0 & 0 & -3 & 3 t \\
\vdots & & & \ddots  \ddots
\end{pmatrix}$$
then we have $\ell_t(a)=\exp(-a M_t)$, which by triangularity holds for any finite truncation as well.
A similar analysis holds for $T_{s,t}(\al)$, but only for the infinite matrix. Assuming that $t^2-4s>0$,
and introducing another parameter $r=\sqrt{1-4s/t^2}$, the relevant change of variables
is:
$$ \al= \frac{2(e^{r t a}-1)}{t  (te^{r t a}(r+1)+r-1)} \qquad \tau_{r,t}(a)=T_{s,t}(\al)$$
in terms of which
$$ \tau_{r,t}(a)\tau_{r,t}(a')=\frac{1}{1-
\frac{(1-r^2)(e^{r t a}-1)(e^{r t a'}-1)}{(te^{rta}(r+1)+r-1)(te^{rta'}(r+1)+r-1)}
}\, \tau_{r,t}(a+a')$$
This reduces to \eqref{pseudoexp} when $r=1$ (corresponding to $s=0$).

\subsection{Truncated determinants}\label{detapp}

For any infinite matrix $A=(a_{i,j})_{i,j\in \Z_+}$, we denote by $A^{[0,n-1]}$ the finite $n\times n$
truncation of $A$ to its $n$ first rows and columns, namely the matrix with entries: $A^{[0,n-1]}=(a_{i,j})_{i,j\in [0,n-1]}$.

In general, the matrix product does not respect truncation. However, if $L,U$ are respectively lower and
upper triangular infinite matrices, then $(LU)^{[0,n-1]}=L^{[0,n-1]} U^{[0,n-1]}$. Note that this does not hold for $(UL)$
(see the example below).

Let us now examine truncated determinants, namely the determinant of such finitely truncated matrices.
By triangularity it is immediate to compute:
\begin{equation}\label{detL} \det\left(L(\al,\beta)^{[0,k]}\right)=(\al \beta)^{k(k+1)/2}=\det\left(U(\al,\beta)^{[0,k]}\right)
\end{equation}
and by the above property we deduce from \eqref{propfour} and \eqref{detL} that:
$$\det\left( T^{[0,k]}(\beta,\beta',\beta\beta'(\al\al'-1))\right)= \det\left(L(\al,\beta)^{[0,k]}U(\al',\beta')^{[0,k]}\right)=
(\al\beta\al'\beta')^{k(k+1)/2} $$
and more generally
$$\det\left( T^{[0,k]}(\al,\beta,\gamma)\right)= (\al\beta+\gamma)^{k(k+1)/2}$$
whereas
\begin{eqnarray*}
\det\left(\big(U(\al',\beta')L(\al,\beta)\big)^{[0,k]}\right)&=&
\frac{\det\left( T^{[0,k]}\left(\frac{\al'\beta\beta'}{1-\beta\beta'},
\frac{\al\beta\beta'}{1-\beta\beta'},\frac{\al\al'\beta\beta'}{1-\beta\beta'}\right)\right)}{(1-\beta\beta')^{k+1}}\\
&=& \frac{(\al\beta\al'\beta')^{k(k+1)/2}}{(1-\beta\beta')^{k+1}} 
\end{eqnarray*}
by use of \eqref{propfive}. The discrepancy with the $LU$ result is because the matrix product
now involves all the elements of the $(k+1)\times$ infinite and infinite $\times (k+1)$ rectangular matrices 
of the truncated product.

The main property allowing for proving truncated determinant identities from relations between generating
functions is the following:

\begin{lemma}\label{unitri}
Let $L,U,A$ be respectively lower triangular, upper triangular and arbitrary infinite matrices,
and let $M=LAU$.
Then:
$$ M^{[0,k]}=L^{[0,k]} \, A^{[0,k]} \, U^{[0,k]} $$
\end{lemma}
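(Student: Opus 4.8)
The plan is to compute the $(i,j)$ entry of $M=LAU$ directly from the definition of matrix multiplication and to show that, whenever $i,j\in[0,k]$, only entries of $L$, $A$ and $U$ carrying indices in $[0,k]$ actually contribute. Writing out the product, for any $i,j\in\Z_+$ one has
$$ M_{i,j}=\sum_{p,q\in\Z_+} L_{i,p}\,A_{p,q}\,U_{q,j}. $$
First I would observe that this sum is in fact finite for fixed $i,j$, so that no regularization by the formal parameter $\epsilon$ of Appendix~\ref{genapp} is needed here: since $L$ is lower triangular, $L_{i,p}=0$ unless $p\leq i$, and since $U$ is upper triangular, $U_{q,j}=0$ unless $q\leq j$. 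Hence the summation ranges are the finite sets $p\in\{0,1,\dots,i\}$ and $q\in\{0,1,\dots,j\}$.

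The key step is then the restriction of the intermediate indices. For $i,j\in[0,k]$ the triangularity bounds $p\leq i\leq k$ and $q\leq j\leq k$ force both summation indices to lie in $[0,k]$ as well. Therefore the terms one might fear to lose when passing to the truncated matrices, namely those with $p>k$ or $q>k$, are already absent, and the infinite sum may be replaced verbatim by a sum over $p,q\in[0,k]$:
$$ M_{i,j}=\sum_{p=0}^{k}\sum_{q=0}^{k} L_{i,p}\,A_{p,q}\,U_{q,j}=\big(L^{[0,k]}\,A^{[0,k]}\,U^{[0,k]}\big)_{i,j}, $$
the last equality being simply the definition of the product of the three $(k+1)\times(k+1)$ blocks. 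Since this holds for every $i,j\in[0,k]$, the claimed identity $M^{[0,k]}=L^{[0,k]}A^{[0,k]}U^{[0,k]}$ follows at once.

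I expect the only real subtlety, rather than a genuine obstacle, to be conceptual: to emphasize \emph{why} the analogous statement fails for a product such as $UL$, as already noted in the text. There the intermediate index $q$ satisfies $q\geq i$ and $q\geq j$ and is unconstrained from above, so the truncated entry $(UL)^{[0,k]}_{i,j}$ receives contributions from $U_{i,q}$ and $L_{q,j}$ with $q>k$ which the truncated product $U^{[0,k]}L^{[0,k]}$ simply discards; this is precisely the origin of the discrepancy factor $(1-\beta\beta')^{-(k+1)}$ computed in Appendix~\ref{detapp}. The proof above makes transparent that the favorable orientation of the triangularities, with $L$ confining the first intermediate index to $[0,i]$ and $U$ confining the second to $[0,j]$, is exactly what keeps all active indices inside the truncation window.
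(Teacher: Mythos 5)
Your proof is correct, and it is exactly the argument the paper intends: the paper states the lemma without a written proof, attributing it simply to the fact that lower triangularity of $L$ and upper triangularity of $U$ confine the intermediate summation indices to $[0,k]$, which is precisely the index bookkeeping you carry out. Your closing remark about why the $UL$ order fails also matches the paper's own discussion of the discrepancy in Appendix \ref{detapp}.
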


Assuming further that both $L$ and $U$ are unitriangular, we then deduce that $\det(M^{[0,k]})=\det(A^{[0,k]})$
for all $k\geq 0$. So we will have identity between all truncated determinants of two infinite matrices
$M$ and $A$ if there is a relation $M=LAU$ for $L,U$ lower and upper unitriangular infinite matrices.

\end{appendix}

\medskip

\begin{flushleft}
Institut de Physique Th\'eorique du Commissariat \`a l'Energie Atomique, \\
Unit\'e de Recherche associ\'ee du CNRS,\\
CEA Saclay/IPhT/Bat 774, F-91191 Gif sur Yvette Cedex, \\
FRANCE. \\
E-mail: {\tt philippe.di-francesco@cea.fr}
%
\end{flushleft}
\begin{flushright}
\end{flushright}

\end{document}